\documentclass[10pt]{amsart}
\usepackage{amsmath,amssymb,amsfonts, amscd,enumerate}
\usepackage{amsbsy}
\usepackage{amsthm}
\usepackage{amstext}
\usepackage{amsopn}
\usepackage{marvosym}
\usepackage{graphicx}
\usepackage{latexsym}
\usepackage[T1]{fontenc}
\usepackage[latin1]{inputenc}
\usepackage{color}
\usepackage{tikz}

\newcommand{\Hmm}[1]{\leavevmode{\marginpar{\tiny%
$\hbox to 0mm{\hspace*{-0.5mm}$\leftarrow$\hss}%
\vcenter{\vrule depth 0.1mm height 0.1mm width \the\marginparwidth}%
\hbox to 0mm{\hss$\rightarrow$\hspace*{-0.5mm}}$\\\relax\raggedright #1}}}

\newtheorem{thm}{Theorem}[section]
\newtheorem{cor}[thm]{Corollary}

\newtheorem{lemma}[thm]{Lemma}
\newtheorem{pro}[thm]{Proposition}

\theoremstyle{definition}
\newtheorem*{defi}{Definition}
\newtheorem{eg}[thm]{Example}
\newtheorem{rem}[thm]{Remark}
\numberwithin{equation}{section}

\newcommand{\Z}{{\mathbb Z}}
\newcommand{\R}{{\mathbb R}}

\newcommand{\N}{{\mathbb N}}

\newcommand{\al}{{\alpha}}

\newcommand{\eps}{{\varepsilon}}
\newcommand{\gm}{{\gamma}}

\newcommand{\lm}{{\lambda}}
\newcommand{\ph}{{\varphi}}

\newcommand{\supp}{{\mathrm{supp}\,}}

\newcommand{\dx}{\,\mathrm{d}x}

\newcommand{\Hm}[1]{\leavevmode{\marginpar{\tiny%
$\hbox to 0mm{\hspace*{-0.5mm}$\leftarrow$\hss}%
\vcenter{\vrule depth 0.1mm height 0.1mm width \the\marginparwidth}%
\hbox to 0mm{\hss$\rightarrow$\hspace*{-0.5mm}}$\\\relax\raggedright
#1}}}

\begin{document}
\title[From Hardy to Rellich inequalities on graphs]
{From Hardy to Rellich inequalities on graphs}
\author[M.~Keller]{Matthias Keller}
\address{M.~Keller,  Institut f\"ur Mathematik, Universit\"at Potsdam,
	14476  Potsdam, Germany}
\email{matthias.keller@uni-potsdam.de}
\author[Y.~Pinchover]{Yehuda Pinchover}
\address{Y.~Pinchover, Department of Mathematics, Technion-Israel Institute of Technology, 3200003 Haifa, Israel}
\email{pincho@technion.ac.il}
\author[F.~Pogorzelski]{Felix Pogorzelski}
\address{F.~Pogorzelski, Institut f\"ur Mathematik, Universit\"at Leipzig, 04109 Leipzig, Germany}
\email{felix.pogorzelski@math.uni-leipzig.de}

\begin{abstract}
We show how to deduce Rellich inequalities from Hardy inequalities on infinite graphs. Specifically, the obtained Rellich inequality gives an upper bound on a function by the Laplacian of the function in terms of weighted norms. These weights involve the Hardy weight and a function which satisfies an eikonal inequality. The results are proven first for Laplacians and are extended to Schr\"odinger operators afterwards. 
	\\[2mm]
	\noindent  2000  \! {\em Mathematics  Subject  Classification.}
	Primary  \! 39A12; Secondary  31C20, 35B09, 35R02, 58E35.\\[2mm]
	\noindent {\em Keywords.} Hardy inequality, Eikonal inequality, Positive solutions, Discrete Schr\"odinger operators, Rellich inequality, Weighted graphs.
\end{abstract}

 \maketitle

\setcounter{section}{-1}
\section{Introduction}\label{sec-intro}
In the International Congress of Mathematicians held in 1954 in Amsterdam, Franz Rellich presented his famous inequality for smooth functions $ \ph $ of compact support in $\R^d$, $ d\neq 2 $,  which  vanish at $ 0 $ and which reads as
\begin{align*}
\int_{\R^{d}}|\Delta \ph (x)|^{2}\dx\ge \frac{d^{2}(d-4)^{2}}{16}\int_{\R^{d}}\frac{|\ph(x)|^{2}}{|x|^{4}}\dx.
\end{align*}

 The inequality was published in 1956, \cite{Rel}, after Rellich's death. Since then various versions of this inequality have been proven in various contexts. We refer to the monograph \cite{BEL15} and \cite{DH} for historical and further references.

In this paper we consider Schr\"odinger operators on infinite graphs and prove Rellich-type inequalities when given a Hardy-type inequality.  Hardy's classical inequality was originally proven in the case of the simplest graph arising from $ \N_{0} $, see \cite{KMP06}, and recently there is a rising interest in discrete and nonlocal Hardy inequalities, \cite{FS08,Gol14,KaLa16,KePiPo2}. In contrast there does not seem to be a discrete counterpart for Rellich's inequality.

Let us sketch the rough ideas and refer for details and definitions  to the next section. Given a graph over a discrete set $ X $, assume that it satisfies the following Hardy inequality with a weight $ w>0 $
\begin{align*}
\sum_{X}|\nabla \ph|^{2}\ge \sum_{X}w\ph^{2}
\end{align*}
for all compactly supported $ \ph $ (for the definition of $ |\nabla \ph|^{2} $ see the next section). Then, using Green's formula for the graph Laplacian $ \Delta $ and the Cauchy-Schwarz inequality we get
\begin{align*}
 \sum_{X}w\ph^{2}\leq \sum_{X}|\nabla \ph|^{2}=\sum_{X}\ph \Delta \ph \leq \left(\sum_{X}w\ph^{2} \right)^{\!1/2}\left(\sum_{X}\frac{1}{w}(\Delta\ph)^{2} \right)^{\!1/2},
\end{align*} 
which yields a Rellich-type inequality for all compactly supported $ \varphi $ 
\begin{align*}
\sum_{X}\frac{1}{w}(\Delta\ph)^{2} \ge\sum_{X}w\ph^{2}.
\end{align*}
An analogous computation in the continuum brings us half way to Rellich's classical inequality as it is stated above. Specifically, one gets an inequality with the weight $ |x|^{2} $ on the left hand side and the weight $ |x|^{-2} $ on the right hand side. To get a proper analogue (and, indeed, a far more flexible statement) we replace $ \ph $ by $ g^{1/2}\ph $  and prove (with somewhat more effort)  a Rellich-type inequality of the form
\begin{align*}
 \sum_{X}\frac{g}{w}(\Delta\ph)^{2} \ge(1-\gamma)\sum_{X}gw\ph^{2}
 \end{align*}
  for all compactly supported $ \varphi $, where  $0< \gamma <1 $, and $ g>0 $ is a function satisfying the eikonal inequality 
 \begin{align*}
 \frac{|\nabla g^{1/2}|^{2}}{g}\leq \gamma w\,.
 \end{align*}
  
To the best of our knowledge the connection of eikonal equalities and the study of Schr\"odinger operators goes back to Rosen \cite{Rosen}, see also \cite{Ag82,Barles,Saito}. Agmon \cite{Ag82} established for Schr\"odinger operators satisfying a Hardy inequality in $\R^d$, a Rellich-type inequality involving the Hardy weight $w$ and a positive function $g$ solving the above eikonal inequality. A Rellich inequality for Schr\"odinger operators on noncompact manifolds with optimal Hardy weights $w$ and a function $g$ satisfying the eikonal equation $|\nabla (g^{1/2})|^{2}/g=\gamma w$ was studied by Devyver/Fraas/Pinchover in \cite{DFP}. Recently, a particular case of Rellich's inequality with $g=w$ satisfying the eikonal inequality was established by Robinson \cite{Ro18} in the context of strongly local Dirichlet forms. Here, we derive such a theorem in the context of graphs as a corollary of our main result. These results are presented in Section~\ref{sec-preliminaries} and proven in Section~\ref{sec-abstract-Hardy}.

In \cite{KePiPo2} the authors showed that one obtains {\em optimal Hardy weights} associated to the Laplacian on graphs, by the supersolution construction
$$  w=\frac{|\nabla u^{1/2}|^{2}}{u}  $$
for a certain strictly positive harmonic function $ u $. In the continuum, this approach goes back  to \cite{DFP}. Given such a Hardy weight $w$ arising from a strictly positive harmonic function $ u $, we prove  in Section~\ref{sec_supersol-Constr},  a general criterion for  a function $ \theta $
such that $ g=(\theta\circ u^{1/2})^2$ satisfies the above eikonal inequality which implies the corresponding Rellich inequality.
Later in Section~\ref{sec_xalpha} we show that we can choose $ \theta $ as fractional powers which leads to the choice of
\begin{align*}
g= u^{\al}
\end{align*}
for $0< \al<1 $. This allows us to establish a discrete analogue of the classical Rellich inequality for $ \Z^{d} $, $ d\ge 5 $, see Example~\ref{ex:Zd}. Furthermore,  in Section~\ref{sec_log} we show that we can choose $ \theta $ as a logarithm  which leads to 
\begin{align*}
g=\log^{2} (u^{1/2}+1).
\end{align*}
 It seems that this weight has not been considered in the continuum so far.

By virtue of the ground state transform, we extend  the Rellich inequality from the Laplacian $ \Delta $ to  weighted Schr\"odinger operators
\begin{align*}
H=\frac{1}{m}\Delta+q,
\end{align*}
with a potential $ q $  and a measure $ m $ in Section~\ref{sec-Schr}.

Finally, in Section~\ref{sec_applications} we show how Rellich's inequality can be applied to obtain a priori estimates and existence of solutions for  the nonhomogeneous equation
\begin{align*}
Hu=f.
\end{align*}

\textbf{Acknowledgements.} The authors acknowledge the financial support of the DFG. MK and FP thank  the Technion for the hospitality where this work was done.
YP acknowledges the support of the Israel Science Foundation (grant 637/19) founded by the Israel Academy of Sciences and Humanities.

 \section{An abstract Rellich inequality}\label{sec-preliminaries}		
Let $X$ be a countably infinite set endowed with the discrete topology. We refer to the elements of $X$ as {\em vertices}.

We denote the space of real valued function on $ X $ by $C(X)$,
and denote the subspace of functions with finite  support by $ C_{c}(X) $. Via continuation by zero,   the spaces $ C(Y) $ and $ C_{c}(Y) $ are considered to be subspaces of $ C(X) $ and $ C_{c}(X) $  for subsets $ Y\subseteq X $. 
For functions $ f\in C(X) $, we denote 
\begin{align*}
\sum_{X}f:=\sum_{x\in X}f(x),
\end{align*}
whenever the right hand side converges absolutely which is obviously the case when $ f $ is in $ C_{c}(X) $. For a functions $ f $, we denote the characteristic function of the support of $ f $ by
\begin{align*}
1_{f}:=1_{\supp f}.
\end{align*}

A {\em measure} of full support on $ X $ is given by a function $ m:X\to (0,\infty) $ which extends to sets via additivity, i.e., for $A\subseteq X$
$$   m(A):=\sum_{x\in A}m(x). $$
This way we obtain the Hilbert space
$$ \ell^{2}(X,m):=\{f\in C(X)\mid \sum_{X}mf^{2}<\infty \}  $$
 with the associated norm $ \|\cdot\|_{m} $ given by $ \|f\|^{2}_m:= \sum_{X}mf^{2}$.  In the case $ m = 1 $, we denote the corresponding Hilbert space and norm by $ \ell^{2}(X) $ and $ \|\cdot\| $.

 A \emph{graph} over $X $  is a symmetric function 
 $b:X \times X\to [0, \infty)$ which vanishes at the diagonal and satisfies
 $$ \sum_{y \in X} b(x,y) < \infty,   \qquad x \in X. $$
 We  say  $x,y,\in X$ are  {\em connected}
 by an {\em edge}, and write $ x\sim y $, whenever $ b(x,y)>0 $.

The {\em formal Laplace operator} (or {\em formal Laplacian}) $ \Delta $ acts on the subspace
\[
\mathcal{F}(X):= \{ f \in C(X)\mid \sum_{y \in X} b(x,y)|f(y)| < \infty \mbox{ for all } x \in X \},
\]
 by
	\[\Delta f(x) :=  \sum_{y \in X} b(x,y)\big( f(x) - f(y) \big). 
	\]
Clearly, $ C_{c}(X)\subseteq \mathcal{F} $. Moreover, it is easy to check that $ \mathcal{F}(X) $ is an algebra with respect to pointwise multiplication. Furthermore, if $ f>0 $ is in $ \mathcal{F}(X) $, then it can be easily seen by H\"older inequality that $ f^{\al} \in \mathcal{F}(X)$ for $ \al\in (0,1) $.

We define
\begin{align*}
|\nabla f|^{2}(x):=\frac{1}{2} \sum_{y\in X}b(x,y)(f(x)-f(y))^{2},
\end{align*}
and observe that it takes finite values for $ f\in \mathcal{F}(X) $.


A fundamental concept for this paper are the so called Hardy-type inequalities. 
\begin{defi}
We say that $\Delta$ satisfies 
on $ Y\subseteq X $ the \emph{Hardy inequality}  with respect to a strictly positive \emph{Hardy weight} $ w:Y\to  (0,\infty) $ if  for all $ \varphi \in C_c(Y)  $
\begin{align*}
\sum_{X}|\nabla \ph|^{2}\ge \|\ph\|_{w}^{2}.
\end{align*}
\end{defi}
With this notation we can present one of the main results of the paper. It states that we can deduce a Rellich-type inequality from a Hardy inequality under certain assumptions on the Hardy weight.
\begin{thm}[Abstract Rellich-type inequality]\label{thm:abstractRellich}
	Let $b$ be a graph over $X$ and suppose that $w$ is a strictly positive  Hardy weight with respect to $\Delta$ on  $ X$. 
	If there is  a strictly positive function    $g \in \mathcal{F}(X)$ and $0 < \gamma < 1$ such that $g$ satisfies the  (pointwise) eikonal inequality
	\[
	 \frac{|\nabla \left(g^{1/2}\right)|^2}{g}\leq \gamma  w  \qquad \mbox{in } X ,
	\]	
	 then 	for all $  \varphi \in C_c(X) $,
	\[
	\left \|1_{\ph}   \Delta \varphi \right\|_{\frac{g}{w}} \geq  (1-\gamma)  \|   \varphi \|_{gw}.
	\]
\end{thm}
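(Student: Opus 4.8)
\emph{Strategy and reduction.} The plan is to follow the heuristic of the introduction but to carry the weight $g$ through every step, reducing the whole statement to the single lower bound
\[
\sum_{X} g\,\varphi\,\Delta\varphi \;\ge\; (1-\gamma)\,\|\varphi\|_{gw}^{2}.
\]
Granting this, the theorem follows from one Cauchy--Schwarz step: since $g\in\mathcal{F}(X)$ is strictly positive, $g^{1/2}\in\mathcal{F}(X)$ (as observed in Section~\ref{sec-preliminaries}), and for $\varphi\in C_{c}(X)$ the functions $g\varphi$ and $g^{1/2}\varphi$ lie in $C_{c}(X)$ with support in $\supp\varphi$, so every sum below is a finite, absolutely convergent sum and the polarized Green formula $\sum_{X}f\,\Delta h=Q(f,h)$ with $Q(f,h):=\tfrac12\sum_{x,y}b(x,y)\big(f(x)-f(y)\big)\big(h(x)-h(y)\big)$ applies. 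Writing $\varphi=1_{\varphi}\varphi$ so that $g\varphi\Delta\varphi=\big(g^{1/2}w^{1/2}\varphi\big)\big(g^{1/2}w^{-1/2}1_{\varphi}\Delta\varphi\big)$, Cauchy--Schwarz gives
\[
\sum_{X}g\,\varphi\,\Delta\varphi\;\le\;\|\varphi\|_{gw}\,\big\|1_{\varphi}\Delta\varphi\big\|_{g/w},
\]
and combining the two displays and dividing by $\|\varphi\|_{gw}$ (the case $\varphi\equiv0$ being trivial) yields $\|1_{\varphi}\Delta\varphi\|_{g/w}\ge(1-\gamma)\|\varphi\|_{gw}$, which is the assertion.

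\emph{The lower bound.} To establish the displayed lower bound I would compare $\sum_{X}g\varphi\Delta\varphi=Q(g\varphi,\varphi)$ with $Q(g^{1/2}\varphi,g^{1/2}\varphi)=\sum_{X}|\nabla(g^{1/2}\varphi)|^{2}$ edge by edge. Setting $p=g^{1/2}(x)$, $q=g^{1/2}(y)$, $s=\varphi(x)$, $t=\varphi(y)$, the elementary identity $(p^{2}s-q^{2}t)(s-t)-(ps-qt)^{2}=-st(p-q)^{2}$ gives, after summing against $\tfrac12\sum_{x,y}b(x,y)$, the exact formula
\[
\sum_{X}g\,\varphi\,\Delta\varphi=\sum_{X}|\nabla(g^{1/2}\varphi)|^{2}-\frac12\sum_{x,y}b(x,y)\,\varphi(x)\varphi(y)\big(g^{1/2}(x)-g^{1/2}(y)\big)^{2}.
\]
By the Hardy inequality applied to $g^{1/2}\varphi\in C_{c}(X)$ the first term on the right is at least $\|g^{1/2}\varphi\|_{w}^{2}=\|\varphi\|_{gw}^{2}$. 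For the error term I would use the arithmetic--geometric mean bound $\varphi(x)\varphi(y)\le\tfrac12(\varphi(x)^{2}+\varphi(y)^{2})$ together with the symmetry of $b$, which gives
\[
\frac12\sum_{x,y}b(x,y)\,\varphi(x)\varphi(y)\big(g^{1/2}(x)-g^{1/2}(y)\big)^{2}\le\sum_{X}\varphi^{2}\,|\nabla(g^{1/2})|^{2}\le\gamma\sum_{X}\varphi^{2}\,wg=\gamma\,\|\varphi\|_{gw}^{2},
\]
where the last inequality is precisely the pointwise eikonal inequality $|\nabla(g^{1/2})|^{2}\le\gamma wg$. Subtracting the two estimates delivers $\sum_{X}g\varphi\Delta\varphi\ge(1-\gamma)\|\varphi\|_{gw}^{2}$.

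\emph{Main obstacle.} The only genuinely non-mechanical point is opening up $g\varphi\Delta\varphi$ in the right way; the clean per-edge identity relating $Q(g\varphi,\varphi)$ to $Q(g^{1/2}\varphi,g^{1/2}\varphi)$ is what forces the eikonal inequality to enter with exactly the factor $\gamma$ and hence produces the constant $1-\gamma$. Once this identity is secured, the Hardy inequality, the arithmetic--geometric mean bound, and the final Cauchy--Schwarz step combine routinely, and the insertion of $1_{\varphi}$ is exactly what keeps the right-hand norm $\|1_{\varphi}\Delta\varphi\|_{g/w}$ finite.
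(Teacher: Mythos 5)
Your proof is correct and follows essentially the same route as the paper: the identity you obtain from the per-edge computation $(p^{2}s-q^{2}t)(s-t)-(ps-qt)^{2}=-st(p-q)^{2}$ is exactly the paper's Lemma~\ref{lemma:main} specialized to $f=g^{1/2}$, and your subsequent steps (Hardy applied to $g^{1/2}\varphi$, the arithmetic--geometric mean bound combined with the pointwise eikonal inequality, and the final Cauchy--Schwarz with the inserted factor $1_{\varphi}$) reproduce the paper's passage through Theorem~\ref{thm:abstractRellich2}. The only difference is cosmetic: the paper derives the key identity via the Leibniz rule and Green's formula, whereas you verify it by a direct elementary identity on each edge.
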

\begin{rem}
	The factor $ 1_{\ph} $ on the left hand side of the above inequality may seem technical, but it is indeed needed for applications in Section~\ref{sec_applications}. This stems from the fact that in the non-local situations of graphs, the support of $ \Delta \ph $ is in general not included in the support of $ \ph $. Here, the factor $ 1_{\ph} $ ensures that the weighted norm of $ \ph $ is estimated only by the values of $\Delta \ph  $ in the support of $ \ph $.
\end{rem}

An immediate corollary of Theorem~\ref{thm:abstractRellich} is a version of a result of Robinson \cite[Theorem~1.1]{Ro18} who proved it in the realm of strongly local Dirichlet forms.  This corollary can be seen as a direct  analogue of Rellich's original inequality \cite[Section~4]{Rel}.
		\begin{cor}[Robinson theorem for graphs]\label{core-Rob}
		Let $b$ be a  graph over $X$.
	 Assume that $w$ is a strictly positive Hardy weight with respect of $\Delta$ on $X$ such that $w \in \mathcal{F}(X)$. Furthermore, assume that there is  $0 < \gamma < 1$ such that 
		 the  following eikonal inequality is satisfied 
		\[
		 \frac{|\nabla \left(w^{1/2}\right)|^2}{w}\leq \gamma  w \qquad \mbox{in } X .
		\]		
		Then, for all $\varphi \in C_c(X)$,
		\[
		\big\| 1_{\ph} \Delta \varphi \big\| \geq (1- \gamma) \|\varphi  \|_{w^{2}}.
		\]
	\end{cor}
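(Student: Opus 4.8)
The plan is to obtain Corollary~\ref{core-Rob} as a direct specialization of Theorem~\ref{thm:abstractRellich}, choosing the free function $g$ to be the Hardy weight $w$ itself. First I would check that the hypotheses of the corollary are exactly the hypotheses of the theorem under this substitution. We are given that $w$ is a strictly positive Hardy weight with $w\in\mathcal F(X)$, so the choice $g:=w$ gives a strictly positive $g\in\mathcal F(X)$, which is the standing requirement on $g$ in the theorem. The eikonal inequality assumed in the corollary,
\[
\frac{|\nabla(w^{1/2})|^2}{w}\leq \gamma w \qquad \mbox{in } X,
\]
is literally the eikonal inequality of the theorem with $g$ replaced by $w$, for the same $\gamma\in(0,1)$. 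Thus every hypothesis of Theorem~\ref{thm:abstractRellich} is met with $g=w$.

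Next I would simply read off the conclusion of the theorem for this choice of $g$ and simplify the weights. The theorem yields, for all $\varphi\in C_c(X)$,
\[
\left\| 1_\ph\,\Delta\varphi\right\|_{\frac{g}{w}}\geq (1-\gamma)\,\|\varphi\|_{gw}.
\]
Substituting $g=w$ collapses the left-hand weight to $g/w=w/w=1$ and the right-hand weight to $gw=w^2$. By the definition of the weighted norms, $\|\cdot\|_{1}=\|\cdot\|$ (the plain $\ell^2(X)$ norm) and $\|\varphi\|_{w^2}^2=\sum_X w^2\varphi^2$, so the inequality becomes exactly
\[
\big\|1_\ph\,\Delta\varphi\big\|\geq (1-\gamma)\,\|\varphi\|_{w^2},
\]
which is the desired statement.

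There is essentially no analytic obstacle here, since the corollary is a pure specialization; the only thing to verify carefully is the bookkeeping of the weights, namely that the ratio $g/w$ and the product $gw$ reduce correctly under $g=w$, and that the strict positivity of $w$ makes these operations well defined pointwise. The one point worth a sentence of justification is that $w\in\mathcal F(X)$ (rather than merely $w>0$) is what legitimately allows $g=w$ to serve as the function $g\in\mathcal F(X)$ demanded by the theorem; without that membership assumption the substitution would not be admissible. Given this, the proof is a one-line application of Theorem~\ref{thm:abstractRellich}.
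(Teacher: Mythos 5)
Your proposal is correct and is exactly the paper's proof: the authors likewise obtain the corollary by setting $g=w$ in Theorem~\ref{thm:abstractRellich} and reading off the conclusion with the weights $g/w=1$ and $gw=w^2$. Your additional remarks on verifying $w\in\mathcal{F}(X)$ and the weight bookkeeping are accurate but merely spell out what the paper leaves implicit.
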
 
	\begin{proof}[Proof of Corollary~\ref{core-Rob}]
Letting $g = w$,  the statement follows directly from Theorem~\ref{thm:abstractRellich}.
	\end{proof} 
The assumption of Robinson for strongly local Dirichlet forms is indeed a weak version of the eikonal inequality which is slightly more technical. We discuss this more general version in the next section were the main theorem is proven.
\section{Proof of the abstract Rellich-type inequality}\label{sec-abstract-Hardy}
In order to streamline the notation of the proofs we introduce some additional notation. For $f \in C(X)$, let the function $ \nabla f\in C(X\times X) $ be given by
\[
\nabla f (x,y) = f(x) - f(y) .
\]
For $ f,g\in C(X) $ let $ f\otimes g\in C(X\times X) $ be given by
\begin{align*}
(f\otimes g)(x,y)=f(x)g(y).
\end{align*}
Finally, for $ f\in C(X) $ and $ g\in C(X\times X) $ we use the convention $ fg =(f\otimes 1) g$, i.e.,
$ (fg)(x,y):=f(x)g(x,y) $,
which yields with the notation above
\begin{align*}
\sum_{X\times X}fg:=\sum_{x,y\in X}f(x)g(x,y),
\end{align*}
whenever the right hand side converges absolutely. 

The following basic identity is fundamental for the proof of Theorem~\ref{thm:abstractRellich}.

\begin{lemma} \label{lemma:main}
	For $\varphi \in C_c(X)$ and $f \in \mathcal{F}(X)$, one has
\begin{align*}
\sum_{X}(f^{2}\ph) \Delta \ph = \sum_{X}|\nabla( f\ph)|^{2}- \frac{1}{2}\sum_{X\times X}b(\ph\otimes\ph)(\nabla f)^{2}.
\end{align*}
\end{lemma}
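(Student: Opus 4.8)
The plan is to rewrite the left-hand side as a symmetric sum over $X\times X$ using the symmetry of $b$ (a discrete Green's formula) and then to reduce the whole identity to an elementary algebraic identity that holds edge by edge. Before any rearrangement, I would first record that all sums in play are absolutely convergent, so that the manipulations are legitimate. Since $\ph\in C_c(X)$ and $\mathcal{F}(X)$ is an algebra containing $C_c(X)$, both $f\ph$ and $f^{2}\ph$ lie in $\mathcal{F}(X)$ and have finite support. Hence $\Delta\ph$ is well defined, the left-hand side $\sum_X(f^{2}\ph)\Delta\ph$ is a finite sum, and the two right-hand terms are finite: the term $\frac12\sum_{X\times X}b\,(\ph\otimes\ph)(\nabla f)^{2}$ carries the factor $\ph(x)\ph(y)$ supported on the finite set $\supp\ph\times\supp\ph$, while $\sum_X|\nabla(f\ph)|^{2}$ is finite because $f\ph$ has finite support and $\sum_x b(x,y)<\infty$ for each $y$.

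Next I would expand the left-hand side. Inserting $\Delta\ph(x)=\sum_y b(x,y)(\ph(x)-\ph(y))$ gives the absolutely convergent double sum $\sum_{x,y}b(x,y)f(x)^{2}\ph(x)(\ph(x)-\ph(y))$. Using $b(x,y)=b(y,x)$ and relabeling $x\leftrightarrow y$ in a second copy of this sum, I would symmetrize to obtain
\[
\sum_{X}(f^{2}\ph)\Delta\ph=\frac12\sum_{x,y}b(x,y)\big(f(x)^{2}\ph(x)-f(y)^{2}\ph(y)\big)(\ph(x)-\ph(y)).
\]
In the same way I would express the two right-hand terms as $\frac12\sum_{x,y}b(x,y)(f(x)\ph(x)-f(y)\ph(y))^{2}$ and $\frac12\sum_{x,y}b(x,y)\ph(x)\ph(y)(f(x)-f(y))^{2}$.

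It then remains to check that the identity holds termwise. Fixing an edge $(x,y)$ and abbreviating $a=f(x)$, $a'=f(y)$, $p=\ph(x)$, $p'=\ph(y)$, the required relation is
\[
(a^{2}p-a'^{2}p')(p-p')=(ap-a'p')^{2}-pp'(a-a')^{2},
\]
which I would verify by expanding both sides to $a^{2}p^{2}+a'^{2}p'^{2}-pp'(a^{2}+a'^{2})$. Multiplying by $\tfrac12 b(x,y)$ and summing over $x,y$ then gives the claim, after recognizing $\frac12\sum_{x,y}b(x,y)(f(x)\ph(x)-f(y)\ph(y))^{2}=\sum_X|\nabla(f\ph)|^{2}$ and $\frac12\sum_{x,y}b(x,y)\ph(x)\ph(y)(f(x)-f(y))^{2}=\frac12\sum_{X\times X}b\,(\ph\otimes\ph)(\nabla f)^{2}$.

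The algebra is routine; the only point requiring care is the convergence bookkeeping in the first paragraph, which legitimizes both the symmetrization (the relabeling $x\leftrightarrow y$) and the splitting of the symmetrized sum into the two right-hand contributions. Because $\ph$ has finite support, each inner sum over $y$ (for $x\in\supp\ph$) converges absolutely by $f^{2}\ph\in\mathcal{F}(X)$ and $\sum_y b(x,y)<\infty$, and the symmetric contribution from $y\in\supp\ph$ is handled identically; so this is a mild technical check rather than a genuine obstacle.
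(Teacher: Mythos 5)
Your proof is correct, but it takes a genuinely different route from the paper's. The paper proves the identity structurally: it applies a Leibniz rule for $\Delta$ to write $(f^{2}\ph)\Delta\ph$ in terms of $f\ph\,\Delta(f\ph)$, $f\ph^{2}\Delta f$ and a cross term, converts the first two via Green's formula into quadratic-form expressions, and then expands $\nabla(\ph^{2}f)$ as $((f\ph)(x)+(f\ph)(y))\nabla\ph+\ph(x)\ph(y)\nabla f$ to collect terms. You instead symmetrize everything at once into double sums over $X\times X$ and reduce the lemma to the single four-variable polynomial identity $(a^{2}p-a'^{2}p')(p-p')=(ap-a'p')^{2}-pp'(a-a')^{2}$, which is verified by expansion (both sides equal $a^{2}p^{2}+a'^{2}p'^{2}-pp'(a^{2}+a'^{2})$). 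Your convergence bookkeeping is adequate: the factor $\ph(x)$ (resp.\ $\ph(x)\ph(y)$) confines the relevant sums to finitely many $x$ (resp.\ a finite set of pairs), and the term $\sum_{X}|\nabla(f\ph)|^{2}$ is controlled using the symmetry of $b$ and $\sum_{x}b(x,y)<\infty$. What each approach buys: the paper's argument displays the identity as a consequence of reusable tools (Green's formula and the discrete Leibniz rule, both quoted elsewhere in the literature), whereas yours is more elementary and self-contained, making the edge-by-edge algebra and the absolute-convergence issues completely transparent; either is acceptable.
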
 

\begin{rem} Recall that $ (\nabla f)^{2} $ is a function in $ C(X\times X) $ given by $(\nabla f)^{2}(x,y)=(f(x)-f(y))^{2}  $, while $ |\nabla f|^{2} $ is a function in $ C(X) $ which is defined as $ |\nabla f|^{2}(x)=\frac{1}{2}\sum_{y\in X}b(x,y)(\nabla f)^{2}(x,y) $.	
\end{rem}
\begin{proof}[Proof of Lemma~\ref{lemma:main}]
Recall the {  Green's formula} from \cite[Lemma~4.7]{HK}
	for $\ph \in C_c(X)$ and $f \in \mathcal{F}(X)$
	\begin{equation*}\label{eq-green form}
	\frac{1}{2}\sum_{X\times X}b\nabla \ph \nabla f=	\sum_{X} \ph \Delta f = 	\sum_{X} f \Delta \ph ,
	\end{equation*}
	where all of the sums converge absolutely. Furthermore, by a direct calculation, we have the following {Leibniz rule} for the Laplacian
	\begin{equation*}\label{eq-Leibniz-r}
	\Delta (f\ph)(x)=f(x)(\Delta\ph )(x)+\ph(x)(\Delta f)(x)-\sum_{y\in X}b(x,y)(\nabla \ph\nabla f )(x,y).
	\end{equation*}
	So, by the Leibniz rule  and  Green's formula  we obtain
	\begin{align*}
	\sum_{X}(f^2 \varphi) \Delta \varphi &=\sum_{X}f\varphi \Delta (f\varphi)-\sum_{X}f\varphi^2  \Delta f+\sum_{X\times X}   f \varphi b\nabla \ph \nabla f\\
&=\sum_{X}|\nabla( f\ph)|^{2}-\frac{1}{2}\sum_{X\times X}b\nabla (f\ph^{2}) \nabla f+\sum_{X\times X}   f\varphi b\nabla \ph  \nabla f.
	\end{align*}
Applying the formula
	\begin{align*}
	\nabla (\ph^{2}f )(x,y)=(( f\ph)(x)+( f\ph)(y))\nabla\ph(x,y)+\ph(x)\ph(y)\nabla f(x,y)
	\end{align*}
	to the second term on the right hand side readily yields the statement.
\end{proof}
We deduce the abstract Rellich-type inequality, Theorem~\ref{thm:abstractRellich}, from a slightly more general yet more technical result, Theorem~\ref{thm:abstractRellich2} below. Observe that the weak eikonal inequality which replaces the pointwise eikonal inequality is a precise analogue of assumption (II)  in the work of Robinson \cite[Theorem~1.1]{Ro18}. 

\begin{thm}\label{thm:abstractRellich2}
	Let $b$ be a graph over $X$, and suppose that $w$ is a strictly positive  Hardy weight with respect of $\Delta$ on a subset $Y \subseteq X$. 
	If there is  a  
strictly	positive function    $g \in \mathcal{F}(Y)$ and $0 < \gamma < 1$ such that $g$ satisfies for some (respectively, all) $ \ph\in C_{c}(Y) $ the following weak eikonal inequality
	\[
\frac{1}{2}	\sum_{X\times X}b (\ph\otimes \ph)(\nabla g^{1/2})^2\leq \gamma\| \ph\|^{2}_{ gw},
	\]	
	then for such a (respectively, all) $  \varphi \in C_c(Y) $
	\[
	\left \|  1_{\ph} \Delta \varphi \right\|_{\frac{g}{w}} \geq  (1-\gamma)  \|   \varphi \|_{gw}.
	\]
\end{thm}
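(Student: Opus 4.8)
The plan is to combine the basic identity from Lemma~\ref{lemma:main} with the Hardy inequality and then absorb the gradient term using the weak eikonal inequality, finishing with a Cauchy--Schwarz estimate. The key is to choose the function $f$ in Lemma~\ref{lemma:main} as $f = g^{1/2}$, so that $f^2 = g$.

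First I would apply Lemma~\ref{lemma:main} with $f = g^{1/2}$, which yields
\[
\sum_{X}(g\ph)\Delta\ph = \sum_{X}|\nabla(g^{1/2}\ph)|^2 - \frac{1}{2}\sum_{X\times X}b(\ph\otimes\ph)(\nabla g^{1/2})^2.
\]
Next I would bound the first term on the right from below using the Hardy inequality applied to the function $g^{1/2}\ph \in C_c(Y)$, giving $\sum_X|\nabla(g^{1/2}\ph)|^2 \geq \|g^{1/2}\ph\|_w^2 = \sum_X w g \ph^2 = \|\ph\|_{gw}^2$. For the second term I would invoke the weak eikonal inequality, which bounds it above by $\gamma\|\ph\|_{gw}^2$. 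Combining these two estimates gives
\[
\sum_{X}(g\ph)\Delta\ph \geq \|\ph\|_{gw}^2 - \gamma\|\ph\|_{gw}^2 = (1-\gamma)\|\ph\|_{gw}^2.
\]

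The remaining step is to pass from this lower bound on the bilinear expression to the stated norm inequality. I would first observe that since $g\ph$ is supported on $\supp\ph$, the left-hand side equals $\sum_X (g\ph)(1_\ph\Delta\ph)$, which lets me insert the factor $1_\ph$ for free. Then I would apply the Cauchy--Schwarz inequality in $\ell^2(X)$ to the pairing $\sum_X (g\ph)(1_\ph\Delta\ph)$, splitting the weight as $g\ph = (g^{1/2}w^{1/2}\ph)\cdot(g^{1/2}w^{-1/2})$ on the support, so that
\[
\sum_{X}(g\ph)(1_\ph\Delta\ph) \leq \Big(\sum_X g w \ph^2\Big)^{1/2}\Big(\sum_X \tfrac{g}{w}(1_\ph\Delta\ph)^2\Big)^{1/2} = \|\ph\|_{gw}\,\big\|1_\ph\Delta\ph\big\|_{\frac{g}{w}}.
\]
Chaining this with the lower bound yields $(1-\gamma)\|\ph\|_{gw}^2 \leq \|\ph\|_{gw}\|1_\ph\Delta\ph\|_{g/w}$, and dividing by $\|\ph\|_{gw}$ (which is positive and finite for nonzero $\ph\in C_c(Y)$, since $g,w>0$) gives the claim.

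I expect the main technical obstacle to be bookkeeping around convergence and support rather than any deep idea: one must check that all sums in Lemma~\ref{lemma:main} converge absolutely (which is guaranteed since $\ph\in C_c$ and $g\in\mathcal{F}(Y)$, so $g^{1/2}\in\mathcal{F}(Y)$ and the Green's formula applies), and one must handle the $1_\ph$ factor carefully in the Cauchy--Schwarz step so that only the values of $\Delta\ph$ on $\supp\ph$ enter. To derive Theorem~\ref{thm:abstractRellich} itself, I would simply note that the pointwise eikonal inequality $|\nabla(g^{1/2})|^2/g \leq \gamma w$ implies the weak eikonal inequality for all $\ph\in C_c(X)$, since multiplying by $g\ph^2$ and summing gives $\frac{1}{2}\sum_{X\times X}b(\ph\otimes\ph)(\nabla g^{1/2})^2 \leq \gamma\sum_X g w \ph^2 = \gamma\|\ph\|_{gw}^2$ after controlling the cross terms.
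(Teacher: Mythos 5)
Your proposal is correct and follows essentially the same route as the paper's own proof: apply Lemma~\ref{lemma:main} with $f=g^{1/2}$, bound the gradient term below via the Hardy inequality applied to $g^{1/2}\ph$ and absorb the remainder with the weak eikonal inequality, then compare against the Cauchy--Schwarz upper bound $\sum_{X}(g\ph 1_{\ph})\Delta\ph \leq \|1_{\ph}\Delta\ph\|_{\frac{g}{w}}\|\ph\|_{gw}$. The supporting observations (inserting $1_{\ph}$ for free, dividing by $\|\ph\|_{gw}$, and deducing the pointwise version via Young's inequality) all match the paper.
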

\begin{proof}
	We use  Lemma~\ref{lemma:main} with  $ f=g^{1/2} $ (where $ g $ is taken from the assumption) together with the Hardy inequality and the weak eikonal inequality from the theorem's assumptions, to obtain for  $\varphi \in C_c(Y)$
	\begin{align*}
	\sum_{X} (g\ph) \Delta \ph\ge \|\ph g^{1/2}\|_{w}^{2} -\frac{1}{2}\sum_{X\times X}b(\ph\otimes \ph)(\nabla g^{1/2})^{2}\ge (1-\gm) \|\ph \|_{gw }^{2}\,.
	\end{align*}
	On the other hand,  the Cauchy-Schwarz inequality gives 
	\begin{align*}
	\sum_{X}(g\ph)\Delta \ph= \sum_{X}(g\ph 1_{\ph}) \Delta \ph\leq \|1_{\ph} \Delta\ph\|_{\frac{g}{w}}\|\ph\|_{{g}{w}}\,.
	\end{align*}
	Combining these two estimates yields the statement of the theorem.	
\end{proof}
\begin{proof}[Proof of Theorem~\ref{thm:abstractRellich}]
	Assume $ g $ satisfies $  |\nabla g^{1/2}  |^{2} \leq \gamma g w $. Then, we obtain for all $ \ph\in C_{c}(X) $ by Young's inequality, $ 2 \varphi(x) \varphi(y) \leq \varphi^2(x) + \varphi^2(y) $, 
	\begin{align*}
	\frac{1}{2}\sum_{X\times X}b (\ph\otimes \ph)(\nabla g^{1/2})^2\leq \sum_{X}\ph^{2}|\nabla g^{1/2} |^{2}\le \gamma\sum_{X}   gw \ph^{2}.
	\end{align*}
	Thus, we can apply the theorem above, Theorem~\ref{thm:abstractRellich2}, and the statement follows.
\end{proof}

\begin{rem}It is obvious from the proof that it is sufficient that $ g\ge 0 $ for the statement of Theorem~\ref{thm:abstractRellich2}.
\end{rem}

To end this section we present a corollary of Theorem~\ref{thm:abstractRellich2} which allows us to restrict ourselves to subsets $ Y \subseteq X$. This will be used frequently in the subsequent sections.
\begin{cor}
	\label{cor:abstractRellich}
	Let $b$ be a graph over $X$ and suppose that $w$ is a strictly positive  Hardy weight with respect to $\Delta$ on a subset $Y\subseteq X$. 
	If there is  a strictly positive function    $g \in \mathcal{F}(Y)$ and $0 < \gamma < 1$ such that $g$ satisfies the  eikonal inequality
	\[
	\frac{1}{2}\sum_{y\in Y}b(x,y)({g^{1/2}(x)-g^{1/2}(y)})^{2} \leq \gamma  g(x)w(x),  \quad x\in  Y,
	\]	
	then, for all $  \varphi \in C_c(Y) $,
	\[
	\left \|  1_{\ph} \Delta \varphi \right\|_{\frac{g}{w}} \geq  (1-\gamma)  \|   \varphi \|_{gw}.
	\]
\end{cor}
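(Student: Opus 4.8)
The plan is to reduce Corollary~\ref{cor:abstractRellich} directly to Theorem~\ref{thm:abstractRellich2} by verifying the latter's hypothesis, namely the weak eikonal inequality
\[
\frac{1}{2}\sum_{X \times X} b\,(\varphi \otimes \varphi)(\nabla g^{1/2})^2 \leq \gamma\|\varphi\|^2_{gw}
\]
for every $\varphi \in C_c(Y)$. Once this is in place, the desired conclusion $\|1_{\varphi} \Delta\varphi\|_{\frac{g}{w}} \geq (1-\gamma)\|\varphi\|_{gw}$ is exactly the output of that theorem, so the entire task collapses to this single estimate on the left-hand side.

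First I would fix $\varphi \in C_c(Y)$ and exploit the factor $\varphi \otimes \varphi$: since $(\varphi \otimes \varphi)(x,y) = \varphi(x)\varphi(y)$ vanishes unless both $x$ and $y$ lie in $\supp\varphi \subseteq Y$, the double sum effectively ranges only over $\supp\varphi \times \supp\varphi$. This confinement is the crucial feature that allows the corollary's eikonal hypothesis, whose inner sum is taken over $y \in Y$ rather than over all of $X$, to be applicable. Within this restricted double sum I would then apply Young's inequality $2\varphi(x)\varphi(y) \leq \varphi^2(x) + \varphi^2(y)$ together with the symmetry of both $b(x,y)$ and $(\nabla g^{1/2})^2(x,y) = (g^{1/2}(x) - g^{1/2}(y))^2$ to symmetrize and collapse the two resulting terms into $\sum_{x \in \supp\varphi}\varphi^2(x)\cdot\frac{1}{2}\sum_{y \in \supp\varphi} b(x,y)(g^{1/2}(x)-g^{1/2}(y))^2$.

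Enlarging the inner $y$-range from $\supp\varphi$ to all of $Y$ only adds nonnegative terms, after which the eikonal inequality of the corollary bounds the inner sum by $\gamma\, g(x)w(x)$ for each $x \in \supp\varphi \subseteq Y$. Summing over $x$ then yields $\gamma\sum_X \varphi^2 gw = \gamma\|\varphi\|^2_{gw}$, which is precisely the weak eikonal inequality required by Theorem~\ref{thm:abstractRellich2}. Note that $g^{1/2}$ is only ever evaluated at points of $Y$ in this chain, so the fact that $g$ lives on the subset $Y$ causes no difficulty.

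The step to watch, and essentially the only subtlety, is the order of operations: I must perform the Young estimate while the summation is still restricted to $\supp\varphi \times \supp\varphi$, and not after letting it spread over $X \times X$. If one symmetrizes over the full vertex set first, the surviving inner sum becomes the full gradient $|\nabla g^{1/2}|^2(x) = \frac{1}{2}\sum_{y \in X} b(x,y)(g^{1/2}(x)-g^{1/2}(y))^2$, which may pick up contributions from neighbors $y \notin Y$ that the corollary's $Y$-localized hypothesis does not control. Keeping the support restriction throughout is exactly what makes the weaker, restricted eikonal inequality sufficient; apart from this, the argument is the routine symmetrization already used in the proof of Theorem~\ref{thm:abstractRellich}.
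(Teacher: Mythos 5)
Your proof is correct and follows essentially the same route as the paper: reduce to Theorem~\ref{thm:abstractRellich2} by verifying the weak eikonal inequality, using the support of $\varphi\otimes\varphi$ to confine the double sum to $Y\times Y$ and then applying Young's inequality with the symmetry of $b$ and $(\nabla g^{1/2})^2$ to bound it by $\sum_{x\in Y}\varphi^2(x)\cdot\frac{1}{2}\sum_{y\in Y}b(x,y)(g^{1/2}(x)-g^{1/2}(y))^2\leq\gamma\|\varphi\|_{gw}^2$. The only cosmetic difference is that you first restrict to $\supp\varphi\times\supp\varphi$ and then enlarge to $Y$, while the paper works with $Y\times Y$ throughout; your closing observation about why the $Y$-localized hypothesis suffices is exactly the point the paper's proof exploits.
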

\begin{proof}
	To check that for $ \ph\in C_{c}(Y) $ the weak eikonal inequality in the assumption of Theorem~\ref{thm:abstractRellich2} is satisfied, observe that by Young's inequality  
	\begin{align*}
	\frac{1}{2}\sum_{X\times X}b (\ph\otimes \ph)(\nabla g^{1/2})^2=&
	\frac{1}{2}\sum_{Y\times Y}b (\ph\otimes \ph)(\nabla g^{1/2})^2\\
	\leq&\frac{1}{2}\sum_{x\in  Y}\ph^{2}(x)\sum_{y\in Y}b(x,y)  ( g^{1/2}(x)-g^{1/2}(y))^2.
	\end{align*}
	Thus, the statement follows from our assumption and Theorem~\ref{thm:abstractRellich2}.
\end{proof}

\section{Rellich-type inequality via the supersolution construction}\label{sec_supersol-Constr}

In the following we use the {\em supersolution construction} which produces Hardy weights on graphs. To this end, we consider strictly positive superharmonic functions $u:X \to (0, \infty)$. 
Here, a function $u\in \mathcal{F}(X)$ is said to be {\em (super)harmonic on $Y \subseteq X$} if $ \Delta u=0 $ (respectively, $ \Delta u\ge0 $) on $ Y $.
\medskip

Let $u$ be  a strictly positive superharmonic $u$ which is not harmonic. Then the so called {\em supersolution construction} yields  a positive function
\[
w= \frac{\Delta u}{u} \, .
\]
 Thus, $ u $  is a positive solution of the equation
\begin{align*}
(\Delta-w)v=0.
\end{align*}
By the Allegretto-Piepenbrink theorem \cite[Theorem~4.2]{KePiPo1}, $ w$ is a Hardy weight for the Laplacian on $X$, i.e.,  for all $ \ph\in C_{c}(X) $
\begin{align*}
\sum_{X}|\nabla \ph|^{2}\ge \sum_{X}w \ph^{2}.
\end{align*}
It is not hard to check  that for any strictly positive  (super)harmonic $u$ and $0<\alpha<1$, we have $ \Delta u^{\alpha}\ge 0 $. Thus, we obtain the Hardy weight 
\[
w= \frac{\Delta (u^{\alpha})}{u^{\alpha}} \, ,
\]
and, especially, for $\alpha=1/2$ we have (see  \cite[Lemma~2.2]{KePiPo2})
 \[
 w = \frac{\Delta (u^{1/2})}{u^{1/2}}=\frac{1}{2u^{1/2}}\Delta u+\frac{|\nabla u^{1/2}|^{2}}{u}\,.
 \]
With some additional assumptions on $u$, one even gets optimality of this Hardy weight $w$, \cite[Theorem~1.1]{KePiPo2}. Specifically, if $ u $  is {\em proper} (i.e., the preimage of compact sets in $ (0,\sup u)\subset [0,\infty)$ is finite), harmonic outside a finite set $K$, and satisfies the \emph{anti-oscillation condition} 
\begin{equation*}
\sup_{x\sim y} \frac{u(x)}{u(y)}<\infty,
\end{equation*}
then, 
\begin{itemize}
	\item $\Delta-w $ is {\em critical}, i.e., for all Hardy weights $ w' $ with $ w'\ge w $ we have $ w=w' $.
	\item $\Delta-w$ is {\em null-critical with respect to} $w$, i.e., $u^{1/2}$, the ground state of $ \Delta-w $, is not in $ \ell^{2}(X,w) $.
\end{itemize}
We note that the above two properties imply that $ w $ is optimal at infinity, i.e., if  $ \lambda w $ is a Hardy weight on $ X\setminus K $ for a finite set $ K $ and some $\lambda > 0 $, then $ \lm\leq 1 $.
In what follows we will see some versions of the anti-oscillation condition above.
\medskip

Using the  Hardy weights obtained by the supersolution construction, we can prove a  Rellich-type inequality for functions supported in a  subgraph which arises from a strengthening of the anti-oscillation condition above. 

\medskip

Recall that the Rellich-type inequality on a subgraph $Y \subset X$, Corollary~\ref{cor:abstractRellich}, is built on the assumption that there is some $0<\gamma < 1$ and some suitable function $g \in \mathcal{F}(Y)$ with $ g>0 $ such that 
\[
\frac{|\nabla (g^{1/2})|^2}{g} \leq \gamma  w
\]
on $Y$. 
We will see in the present section that in some situations, we can work with $g = (\theta \circ u^{1/2})^2$, where $u$ is the strictly positive function used in the supersolution construction and $\theta:[0, \infty) \to [0, \infty)$ is a suitable function. This is based on the concept of admissible functions.

\begin{defi}
 Let $ c\ge 0 $. A function $\theta:[c, \infty) \to [0, \infty)$ is called 
	{\em  $\eps$-admissible} for  $0 < \varepsilon < 1$ if there is some $\gamma = \gamma( \varepsilon) \in[0, 1)$ such that 
\begin{align*}
 |\theta (t)-\theta (at)|\leq \gamma^{1/2}|1-a|\theta (t),  
\end{align*}
for all  $ t\in [c,\infty)$, $a \in [\varepsilon, \infty) $ such that $ at\ge c $.
If $\theta$ is an  $\eps$-admissible function, then we call $\gamma= \gamma(\varepsilon)$ an {\em  admissible constant} for $\varepsilon$. 
\end{defi}

\begin{rem}
	In the next sections we will show that $ t\mapsto t^{\alpha} $ and $ t\mapsto \log(t+1) $ are admissible functions for suitable intervals $ [c,\infty) $. To get some intuition, let us discuss a consequence of admissibility. Assuming that $ \theta $ is strictly positive on $[0,\infty)$, then admissibility  means that the functions
	\begin{align*}
\theta_t: [\eps,\infty)\to [0,\infty),\qquad	\theta_t(a):=\frac{\theta (at)}{\theta(t)}	
	\end{align*}
	are $ \gm^{1/2} $-Lipschitz  continuous  at $ a=1 $ for all $ t\in [0,\infty) $.
	On the other hand, if an $ \eps $-admissible function $ \theta $ satisfies $ \theta(t_{0}) =0$ for $ t_{0}> 0 $, then $ \theta\equiv 0$ on $ [\eps t_{0},\infty) $  whenever  $ \eps t_0\geq c $ and iteratively on $ (c,\infty) $.
\end{rem}

\begin{thm}[Rellich-type inequality via the supersolution construction]  \label{thm:RellichboundedGEN}
	Let $ b $ be a graph over $ X $. Let $ u $ be a strictly positive superharmonic function, and assume that the Hardy weight
	$w={\Delta(u^{1/2})}/{u^{1/2}} $ is strictly positive. Fix $0< \eps<1 $ and let $\theta: [\inf u^{1/2},\infty)\to [0,\infty)$  be   $\eps$-admissible  with constant $ \gamma=\gamma(\eps)$.
	Then, 
		\[
	{\left \|1_{\ph}  \Delta \varphi  \right\|}_{\frac{(\theta \circ {u^{1/2}})^{2}}{w}} \geq   (1-\gamma )  \|  \varphi \|_{ (\theta \circ u^{1/2} )^{2}w } 
		\]	
	for all $ \ph\in C_{c}(X_{\eps}) $, where
	\begin{eqnarray*}
		X_{\varepsilon} &:=& \left\{x \in X\mid \inf_{y \sim x} \frac{u(y)}{u(x)} \geq \varepsilon^{2} \right\}.
		\end{eqnarray*} 
\end{thm}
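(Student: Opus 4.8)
The plan is to apply Corollary~\ref{cor:abstractRellich} with the subset $Y = X_\eps$, the Hardy weight $w = \Delta(u^{1/2})/u^{1/2}$ coming from the supersolution construction, and the function $g = (\theta\circ u^{1/2})^2$. The entire content of the theorem is thus reduced to verifying the pointwise (vertex-wise) eikonal inequality
\[
\frac{1}{2}\sum_{y\in X_\eps} b(x,y)\bigl(g^{1/2}(x)-g^{1/2}(y)\bigr)^2 \leq \gamma\, g(x)\, w(x), \qquad x\in X_\eps,
\]
where $g^{1/2} = \theta\circ u^{1/2}$. Since the Hardy inequality from the supersolution construction holds on all of $X$ (hence in particular on $X_\eps$), and $g>0$ lies in $\mathcal{F}(X_\eps)$, the corollary will immediately deliver the stated Rellich inequality once this eikonal bound is established.

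First I would fix $x \in X_\eps$ and write $t := u^{1/2}(x)$ and, for each neighbor $y\sim x$, set $a := u^{1/2}(y)/u^{1/2}(x)$, so that $u^{1/2}(y) = a t$. The defining condition of $X_\eps$, namely $\inf_{y\sim x} u(y)/u(x)\geq \eps^2$, translates exactly into $a\geq \eps$ for every $y\sim x$; this is the crucial point that lets me invoke admissibility. Applying the $\eps$-admissibility of $\theta$ with base point $t = u^{1/2}(x)\geq \inf u^{1/2}$ and ratio $a\geq\eps$ gives
\[
\bigl(g^{1/2}(x)-g^{1/2}(y)\bigr)^2 = \bigl(\theta(t)-\theta(at)\bigr)^2 \leq \gamma\,(1-a)^2\,\theta(t)^2 = \gamma\,\bigl(u^{1/2}(x)-u^{1/2}(y)\bigr)^2\,\frac{g(x)}{u(x)},
\]
where in the last equality I used $(1-a)^2 u(x) = (u^{1/2}(x)-u^{1/2}(y))^2$ and $\theta(t)^2 = g(x)$.

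Summing this pointwise estimate over $y\in X_\eps$ and pulling out the constant factor $g(x)/u(x)$ yields
\[
\frac{1}{2}\sum_{y\in X_\eps} b(x,y)\bigl(g^{1/2}(x)-g^{1/2}(y)\bigr)^2 \leq \gamma\,\frac{g(x)}{u(x)}\cdot\frac{1}{2}\sum_{y\in X_\eps} b(x,y)\bigl(u^{1/2}(x)-u^{1/2}(y)\bigr)^2.
\]
The inner sum is at most $|\nabla u^{1/2}|^2(x)$ (the full sum over all $y\in X$). Here I would use the formula $w = \Delta(u^{1/2})/u^{1/2} = (2u^{1/2})^{-1}\Delta u + |\nabla u^{1/2}|^2/u$ recorded in the excerpt, together with $\Delta u \geq 0$ (superharmonicity of $u$), to conclude that $|\nabla u^{1/2}|^2(x)/u(x) \leq w(x)$. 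Substituting this gives the desired bound $\gamma\, g(x)\, w(x)$, completing the verification of the eikonal inequality on $X_\eps$.

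I expect the main obstacle to be the bookkeeping at two boundary-type points rather than any deep difficulty. The first is making the change of variables clean and ensuring the admissibility condition is applied with its hypotheses genuinely met: one must check both that $t = u^{1/2}(x)$ lies in the domain $[\inf u^{1/2},\infty)$ and that $at = u^{1/2}(y)\geq \inf u^{1/2}$ holds for the neighbors, which is automatic since $u^{1/2}(y)\geq \inf u^{1/2}$. The second is the control $|\nabla u^{1/2}|^2/u \leq w$, which hinges precisely on discarding the nonnegative term $(2u^{1/2})^{-1}\Delta u\geq 0$; this is where superharmonicity (as opposed to harmonicity) of $u$ enters and is essential to the argument.
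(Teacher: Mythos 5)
Your proposal is correct and follows essentially the same route as the paper: apply Corollary~\ref{cor:abstractRellich} on $Y=X_\eps$ with $g^{1/2}=\theta\circ u^{1/2}$, use admissibility with $t=u^{1/2}(x)$ and $a=u^{1/2}(y)/u^{1/2}(x)\ge\eps$, and discard the nonnegative term $(2u^{1/2})^{-1}\Delta u$ to get $|\nabla u^{1/2}|^2/u\le w$. The only detail you gloss over is that $\theta$, hence $g$, could a priori vanish; the paper disposes of this by noting that an $\eps$-admissible function vanishing at one point of its domain is identically zero there (trivial case), and is otherwise strictly positive on the range of $u^{1/2}$.
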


\begin{proof} In view of the remark above $ \theta $ is either 
	zero or strictly positive on the range of $ u^{1/2} $.
	Clearly, the statement is trivial for $ \theta \equiv 0 $ and therefore we may assume that $ \theta>0 $ on the range of $ u^{1/2}$. 	We  apply the Rellich-type inequality given by Corollary~\ref{cor:abstractRellich} with $Y= X_{\varepsilon}$,  $g^{1/2} = (\theta \circ u^{1/2})$ and $$ w =\frac{\Delta u^{1/2}}{u^{1/2}}=\frac{1}{2u^{1/2}}\Delta u+\frac{|\nabla u^{1/2}|^{2}}{u} \ge\frac{|\nabla u^{1/2}|^{2}}{ u} $$ on $ X_{\eps} $. Plugging  $ t=u^{1/2} $ into the assumption of admissibility of $ \theta $ yields that
		\begin{align*}
		\frac{	|\theta( u^{1/2}(x))-\theta( u^{1/2}(y))|}{\theta( u^{1/2}(x))}\leq \gamma^{1/2} 
		\frac{	| u^{1/2}(x)- u^{1/2}(y)|}{u(x)^{1/2}}\, ,
		\end{align*}
		whenever $ u(y)/u(x)\ge \eps^{2} $.
		Hence,  we infer
		\begin{align*}
		\frac{|\nabla (\theta\circ u^{1/2})|^{2}}{(\theta\circ u^{1/2})^{2}}\leq 	\gamma\frac{|\nabla  u^{1/2}|^{2}}{u}\le\gamma w
		\end{align*}
		on $ X_{\eps} $.
		Thus, the function $ g=(\theta\circ u^{1/2})^{2} $ satisfies the eikonal inequality in Corollary~\ref{cor:abstractRellich} and we conclude the result.
\end{proof}

A graph is said to have {\em standard weights} if $ b $ maps $ X\times X $ into $ \{0,1\} $. This is the setting of combinatorial graphs where all edges have the weight $ 1 $ and the vertex degree is the number of edges emanating from a vertex. Note that the assumption $ \sum_{y\in X}b(x,y)<\infty $ for all $ x\in X $ implies that graphs with standard weights are locally finite, i.e., there is a finite number of vertices emanating from each vertex.

\begin{thm} \label{thm:RellichGENGLOB}
	Let $b$ be a  graph over $X$ with standard weights and  vertex degree bounded by $ D $. 
 Let $ u $ be a function which is strictly positive and  superharmonic  on $ X\setminus K $ and vanishes on a set $ K \subset X$. Assume that the Hardy weight
 $w={\Delta(u^{1/2})}/{u^{1/2}} $ is strictly positive on $ X\setminus K $. 	Suppose that there is an $\eps$-admissible function $\theta:[\inf u^{1/2}, \infty) \to [0,\infty)$   for $ \eps=D^{-1/2} $, and let $0\le \gamma<1 $ be the corresponding admissibility constant. Then, for all $  \varphi \in C_c(X\setminus K) $,
	\[
	{\left \|1_{\ph}  \Delta \varphi  \right\|}_{\frac{(\theta \circ {u^{1/2}})^{2}}{w}} \geq   (1-\gamma )  \|  \varphi \|_{ (\theta \circ u^{1/2} )^{2}w }.
	\]	
\end{thm}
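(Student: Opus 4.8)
The plan is to derive Theorem~\ref{thm:RellichGENGLOB} as a corollary of Theorem~\ref{thm:RellichboundedGEN}, in exactly the same spirit as Corollary~\ref{core-Rob} was deduced from the abstract theorem. The crucial observation is that the set $X_\eps$ appearing in Theorem~\ref{thm:RellichboundedGEN}, specialized to the present choice $\eps = D^{-1/2}$, becomes all of $X \setminus K$ (up to the vertices where $u$ vanishes). First I would verify this: for a graph with standard weights, $b(x,y) \in \{0,1\}$, and superharmonicity $\Delta u(x) \ge 0$ at $x \in X\setminus K$ reads $\deg(x)\, u(x) \ge \sum_{y\sim x} u(y) \ge u(y_0)$ for each fixed neighbour $y_0 \sim x$. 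Hence $u(y_0)/u(x) \le \deg(x) \le D = \eps^{-2}$, so the \emph{upper} ratio is bounded. I would need to confirm that the set $X_\eps$ as defined via $\inf_{y\sim x} u(y)/u(x) \ge \eps^2$ captures exactly the lower ratio bound that the eikonal estimate in the proof of Theorem~\ref{thm:RellichboundedGEN} uses (the admissibility hypothesis is invoked with $a = u(y)/u(x) \ge \eps^2$), and that the superharmonicity on $X\setminus K$ together with $u \equiv 0$ on $K$ and the degree bound forces every $x \in X \setminus K$ into $X_\eps$.

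The key steps, in order, are as follows. (1) Fix $\eps = D^{-1/2}$ and show $X \setminus K \subseteq X_\eps$. The subtlety is the direction of the inequality: $X_\eps$ is defined by a \emph{lower} bound on $u(y)/u(x)$, whereas superharmonicity gives naturally an upper bound $u(y)/u(x) \le D$. I would therefore reconsider: what we actually want is that for $x \in X\setminus K$ and any neighbour $y$, the ratio $u(y)/u(x)$ is bounded below by $\eps^2 = D^{-1}$. This does \emph{not} follow from superharmonicity of $u$ at $x$ directly; rather one should apply superharmonicity at the neighbour $y$ (when $y \notin K$) to get $u(x)/u(y) \le D$, i.e. $u(y)/u(x) \ge D^{-1} = \eps^2$, which is precisely the membership condition for $X_\eps$. (2) Handle neighbours $y \in K$, where $u(y) = 0$: here the ratio $u(y)/u(x) = 0 < \eps^2$, so such vertices $x$ are \emph{not} in $X_\eps$ under the literal definition. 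I expect this to be the main obstacle and the point requiring the most care.

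The resolution I would pursue is to invoke Corollary~\ref{cor:abstractRellich} \emph{directly} on $Y = X\setminus K$, rather than passing through the set $X_\eps$, and to re-run the eikonal estimate from the proof of Theorem~\ref{thm:RellichboundedGEN} restricting all neighbour sums to $y \in Y = X\setminus K$. Concretely, in Corollary~\ref{cor:abstractRellich} the eikonal inequality only sums over $y \in Y$, so contributions from $y \in K$ are excluded by fiat; one only needs the admissibility estimate for those neighbours $y \in X\setminus K$, for which superharmonicity at $y$ yields $u(y)/u(x) \ge D^{-1} = \eps^2$ as above, so that admissibility of $\theta$ with parameter $a = u(y)/u(x) \ge \eps^2$ applies. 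This gives the pointwise bound $\tfrac12 \sum_{y \in Y} b(x,y)(g^{1/2}(x) - g^{1/2}(y))^2 \le \gamma\, g(x) w(x)$ for $g = (\theta\circ u^{1/2})^2$ on $Y$, after which Corollary~\ref{cor:abstractRellich} delivers the claimed inequality.

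I would write the proof as:
\begin{proof}[Proof of Theorem~\ref{thm:RellichGENGLOB}]
Set $\eps = D^{-1/2}$ and $Y = X\setminus K$, and let $g = (\theta\circ u^{1/2})^2$. As in the proof of Theorem~\ref{thm:RellichboundedGEN} we may assume $\theta > 0$ on the range of $u^{1/2}$. We verify the eikonal inequality of Corollary~\ref{cor:abstractRellich} on $Y$. Let $x \in Y$ and $y \sim x$ with $y \in Y$. Since $u$ is superharmonic at $y$ and the weights are standard with degree at most $D$, we have
\[
u(y) \le u(x) \le \frac{1}{b(x,y)}\sum_{z \sim y} b(y,z) u(z) \le D\, u(y),
\]
hence $u(y)/u(x) \ge D^{-1} = \eps^2$. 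Applying the admissibility of $\theta$ with $t = u^{1/2}(x)$ and $a = u^{1/2}(y)/u^{1/2}(x) \ge \eps$ gives
\[
|\theta(u^{1/2}(x)) - \theta(u^{1/2}(y))| \le \gamma^{1/2}\,|u^{1/2}(x) - u^{1/2}(y)|\,\frac{\theta(u^{1/2}(x))}{u^{1/2}(x)}.
\]
Summing over $y \in Y$ with $y \sim x$ and using $w \ge |\nabla u^{1/2}|^2/u$ on $Y$, we obtain
\[
\frac12 \sum_{y \in Y} b(x,y)\big(g^{1/2}(x) - g^{1/2}(y)\big)^2 \le \gamma\, \frac{g(x)}{u(x)}\,|\nabla u^{1/2}|^2(x) \le \gamma\, g(x) w(x)
\]
for all $x \in Y$. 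Thus $g$ satisfies the eikonal inequality of Corollary~\ref{cor:abstractRellich} on $Y$, and the claimed inequality follows.
\end{proof}
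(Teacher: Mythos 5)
Your proof is correct and follows essentially the same route as the paper's: the local Harnack inequality $u(y)/u(x)\ge 1/D$ for neighbours $x\sim y$ in $Y=X\setminus K$ (obtained from superharmonicity at $y$), the pointwise admissibility estimate with $t=u^{1/2}(x)$ and $a=u^{1/2}(y)/u^{1/2}(x)\ge D^{-1/2}$, summation over neighbours in $Y$, and an application of Corollary~\ref{cor:abstractRellich} with $g=(\theta\circ u^{1/2})^2$; your preliminary observation that one must restrict the neighbour sums to $Y$ rather than pass through the set $X_\eps$ of Theorem~\ref{thm:RellichboundedGEN} is exactly the reason the paper treats this statement separately. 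One cosmetic slip: the first link $u(y)\le u(x)$ in your displayed chain is false in general, but it is also unused, since the needed conclusion follows from $u(x)\le\sum_{z\sim y}u(z)\le D\,u(y)$ alone.
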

\begin{proof}
Let $ D $ be the upper bound on the vertex degree and $ Y=X\setminus K $. It is easy to verify that for a strictly positive superharmonic function and $ x,y\in Y $ such that $ x\sim y $, we have the local Harnack inequality 
\begin{align*}
\frac{u(y)}{u(x)}\ge \frac{1}{D}\,.
\end{align*} 
Hence, admissibility yields that for all $ x,y\in Y $ such that $ x\sim y $ we have
\begin{align*}
\frac{(\nabla( \theta\circ u^{\frac{1}{2}}))^{2}(x,y) }{\theta( u^{\frac{1}{2}}(x))^{2}}\leq \gamma\frac{(\nabla(  u^{\frac{1}{2}}))^{2}(x,y) }{ u(x)}\, .
\end{align*}
Summing about the neighbors of $ x $ in $ Y $ yields
\begin{align*}
\frac{1}{2\left(\theta(u^{\frac{1}{2}}(x))\right)^2}\sum_{y\in Y}b(x,y) ( \theta(u^{\frac{1}{2}}(x))-\theta(u^{\frac{1}{2}}(y)))^{2}
\leq\gamma\frac{|\nabla \left(u^{1/2(x)}\right)|^{2}}{u(x)}\leq \gamma w(x).
\end{align*}
Thus, the statement follows from Corollary~\ref{cor:abstractRellich} applied with $ g= (\theta\circ u^{\frac{1}{2}})^{2}$.
\end{proof}
\begin{rem}
Suppose that $X$ is a weighted graph satisfying the {\em uniform ellipticity condition}: There exists $\lambda>0$ such that for every $x,z\in X$ such that $x\sim z$ we have  
$$\sum_{y\in X}b(x,y)\leq \lambda b(x,z). $$
By the same means as above, we get the assertion of Theorem~\ref{thm:RellichGENGLOB} in the case of weighted graphs with $D$ replaced by $\lambda$.
Indeed, the above ellipticity condition clearly implies the {\em uniform Harnack inequality}:
for any strictly positive superharmonic function $u$ in $X$ and $ x,y\in X $ 
such that $ x\sim y$ we have
\begin{align*}
\frac{u(y)}{u(x)}\geq \frac{1}{\lambda} \,.
\end{align*}
Hence, the proof of Theorem~\ref{thm:RellichGENGLOB} can be applied verbatim.
\end{rem}

\section{The function $\theta(t) = t^{\alpha}$} \label{sec_xalpha}
In the present section we show that for $0 < \alpha < 1$, the function $\theta:[0, \infty) \to [0, \infty), \,\, \theta(t) = t^{\alpha}$ is $\eps$-admissible for $0<\eps<1$. Therefore, in view of Theorem~\ref{thm:RellichGENGLOB}, we get the following corollary. 

\begin{cor} \label{thm:Rellichbounded} 	Let $b$ be a  graph over $X$ with standard weights and  vertex degree bounded by $ D $.  Let $ u $ be a function which vanishes on a set $ K \subset X$ and is  strictly positive and  superharmonic  on $ X\setminus K $. Assume that the Hardy weight
	$w={\Delta(u^{1/2})}/{u^{1/2}} $ is strictly positive on $ X\setminus K $.
	Then, for all $0 <\alpha < 1$ and $ \ph\in C_{c}(X\setminus K) $,
	\[
	\Big \|1_{\ph}  \Delta \varphi \Big\|_{ \frac{u^{\alpha}}{w}} \geq (1-\gamma)
	 \|   \varphi \|_{wu^{\alpha} } ,
	\]
where 
	\begin{align*}
		\gamma =  \left( \frac{1- D^{-\alpha/2}}{1 - D^{-1/2} } \right)^2.
	\end{align*}
\end{cor}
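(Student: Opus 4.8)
The plan is to verify that the power function $\theta(t)=t^{\alpha}$ is $\eps$-admissible on $[0,\infty)$ for every $0<\eps<1$, to identify its admissibility constant explicitly, and then to read off the corollary from Theorem~\ref{thm:RellichGENGLOB} with the choice $\eps=D^{-1/2}$. Since $u$ vanishes on $K$ we have $\inf u^{1/2}=0$, so the relevant domain of $\theta$ is indeed $[0,\infty)$, on which $\theta(t)=t^{\alpha}$ is defined, and with $g=(\theta\circ u^{1/2})^{2}=u^{\alpha}$ the conclusion of Theorem~\ref{thm:RellichGENGLOB} is exactly the claimed inequality.

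First I would unravel the admissibility condition. For $\theta(t)=t^{\alpha}$ the required inequality $|\theta(t)-\theta(at)|\le\gamma^{1/2}|1-a|\theta(t)$ reads $t^{\alpha}|1-a^{\alpha}|\le\gamma^{1/2}|1-a|\,t^{\alpha}$. At $t=0$ both sides vanish, and for $t>0$ we may cancel $t^{\alpha}$, so admissibility with constant $\gamma$ is equivalent to the $t$-independent inequality $|1-a^{\alpha}|\le\gamma^{1/2}|1-a|$ for all $a\in[\eps,\infty)$. Hence it suffices to control, uniformly over $a\ge\eps$ with $a\neq1$, the quotient
\[
f(a):=\frac{|1-a^{\alpha}|}{|1-a|}=\frac{a^{\alpha}-1}{a-1},
\]
where the last equality holds for all $a>0$, $a\neq1$, because the signs of numerator and denominator agree on both sides of $1$. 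Geometrically $f(a)$ is the slope of the chord of $t\mapsto t^{\alpha}$ joining $1$ and $a$.

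The crux of the argument, and the only real obstacle, is to show that $f$ is non-increasing on $(0,\infty)$, so that over $[\eps,\infty)$ its supremum is attained at the left endpoint $a=\eps$. I would argue this from concavity by writing $a^{\alpha}-1=\int_{1}^{a}\alpha t^{\alpha-1}\dt$, which exhibits $f(a)$ as the mean value of the strictly decreasing function $t\mapsto\alpha t^{\alpha-1}$ over the interval between $1$ and $a$. Increasing $a$ beyond $1$ extends this averaging interval into a region of smaller values, while decreasing $a$ below $1$ extends it into a region of larger values; in either case $f$ decreases as $a$ grows (with $f(a)\to1$ as $a\to0^{+}$, $f(1)=\alpha$, and $f(a)\to0$ as $a\to\infty$). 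Consequently $\sup_{a\ge\eps}f(a)=f(\eps)=\tfrac{1-\eps^{\alpha}}{1-\eps}$, so $\theta(t)=t^{\alpha}$ is $\eps$-admissible with constant $\gamma(\eps)=\bigl(\tfrac{1-\eps^{\alpha}}{1-\eps}\bigr)^{2}$. Because $0<\alpha<1$ and $0<\eps<1$ force $\eps^{\alpha}>\eps$, we get $0<1-\eps^{\alpha}<1-\eps$, whence $\gamma(\eps)^{1/2}<1$ and $\gamma(\eps)\in[0,1)$ as demanded by the definition.

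Finally I would specialize to $\eps=D^{-1/2}$, which lies in $(0,1)$ since $D>1$. The admissibility constant then becomes
\[
\gamma=\left(\frac{1-(D^{-1/2})^{\alpha}}{1-D^{-1/2}}\right)^{2}=\left(\frac{1-D^{-\alpha/2}}{1-D^{-1/2}}\right)^{2},
\]
matching the constant in the statement. Applying Theorem~\ref{thm:RellichGENGLOB} to this $\eps$-admissible $\theta$ and the given data $(u,w)$, with $g=(\theta\circ u^{1/2})^{2}=u^{\alpha}$, yields the asserted Rellich inequality for all $\varphi\in C_{c}(X\setminus K)$. Everything beyond the monotonicity of $f$ is routine substitution.
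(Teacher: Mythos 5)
Your proposal is correct and follows the same overall route as the paper: establish that $\theta(t)=t^{\alpha}$ is $\eps$-admissible on $[0,\infty)$ with constant $\gamma(\eps)=\bigl(\frac{1-\eps^{\alpha}}{1-\eps}\bigr)^{2}$, then feed this into Theorem~\ref{thm:RellichGENGLOB} with $\eps=D^{-1/2}$ and $g=u^{\alpha}$; this is exactly the content of Lemma~\ref{lemma:sqdominatedxalpha} plus the paper's two-line deduction of the corollary. The only substantive difference is how the key monotonicity is proved. The paper works with $\vartheta(t)=\bigl(\frac{1-t^{\alpha}}{1-t}\bigr)^{2}$, computes $\vartheta'$ explicitly, and determines its sign via the mean value theorem applied separately for $t>1$ and $t<1$. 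You instead observe that the chord slope $f(a)=\frac{a^{\alpha}-1}{a-1}=\frac{1}{a-1}\int_{1}^{a}\alpha t^{\alpha-1}\,\mathrm{d}t$ is the average of the strictly decreasing function $t\mapsto \alpha t^{\alpha-1}$ over the interval between $1$ and $a$, and hence decreases in $a$; this is a standard consequence of concavity (decreasing difference quotients) and avoids the derivative computation entirely. Your argument is shorter and more conceptual, while the paper's is more computational but self-contained at the calculus level; both yield the identical constant and the identical conclusion, including the check that $\eps^{\alpha}>\eps$ forces $\gamma(\eps)<1$ and the observation that admissibility on $[0,\infty)$ is what is needed since $\inf u^{1/2}=0$.
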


The proof of the above corollary rests on the following lemma. 

\begin{lemma} \label{lemma:sqdominatedxalpha}
	For $0 < \alpha < 1$, the function $\theta:[0, \infty) \to [0, \infty), \,\, \theta(t) = t^{\alpha}$ is $\eps$-admissible  for $0 < \varepsilon <  1$ with  admissible constant
	\[
	\gamma(\varepsilon) = \left( \frac{1 - \varepsilon^{\alpha}}{1- \varepsilon} \right)^2.
	\]	

\end{lemma}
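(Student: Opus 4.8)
The goal is to verify the admissibility inequality
\[
|\theta(t) - \theta(at)| \leq \gamma^{1/2}\,|1-a|\,\theta(t)
\]
for $\theta(t) = t^{\alpha}$ with $0 < \alpha < 1$, all $t \in [0,\infty)$, and all $a \in [\varepsilon,\infty)$. Substituting $\theta(t) = t^{\alpha}$ and dividing through by $t^{\alpha}$ (the case $t=0$ being trivial since both sides vanish), the claim reduces to the single-variable inequality
\[
|1 - a^{\alpha}| \leq \gamma^{1/2}\,|1 - a| \qquad \text{for all } a \in [\varepsilon,\infty),
\]
with the asserted value $\gamma^{1/2} = (1-\varepsilon^{\alpha})/(1-\varepsilon)$. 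So the entire problem collapses to a one-variable estimate for the function $a \mapsto a^{\alpha}$, independent of $t$; this homogeneity is the key simplification.

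My plan is to study the ratio $R(a) := |1 - a^{\alpha}|/|1-a|$ on $(0,\infty)\setminus\{1\}$ and show that its supremum over $a \in [\varepsilon,\infty)$ is attained at $a = \varepsilon$, giving the stated constant. First I would note $R$ extends continuously to $a=1$ with value $\alpha < 1$ (by L'Hôpital or the derivative of $a^\alpha$). Then I would split into the two regimes $a \in (0,1)$ and $a > 1$. For $a > 1$ one has $a^{\alpha} - 1 < \alpha(a-1)$ by concavity of $a \mapsto a^{\alpha}$ (the chord lies below the tangent-at-$1$ estimate; more directly, $a^\alpha$ is concave so the secant slope $(a^\alpha-1)/(a-1)$ is decreasing in $a$), hence $R(a) < \alpha$ there. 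For $a \in (0,1)$, writing $R(a) = (1-a^{\alpha})/(1-a)$, the same concavity shows the secant slope from $1$ is monotone, so $R$ is decreasing on $(0,1)$ and thus maximized as $a \downarrow \varepsilon$ within $[\varepsilon,1)$, yielding $R(\varepsilon) = (1-\varepsilon^{\alpha})/(1-\varepsilon)$.

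The main technical point — and the step I expect to require the most care — is establishing that $(1-a^{\alpha})/(1-a)$ is monotone decreasing on $(0,1)$, i.e.\ that the secant slope of the concave function $t \mapsto t^\alpha$ taken between $a$ and $1$ increases as $a \to 1^-$. This is a standard consequence of concavity (secant slopes of a concave function are monotone in each endpoint), but one should state it cleanly, perhaps via the three-chord lemma or by directly differentiating $R$ and checking the sign of the numerator $h(a) := \alpha a^{\alpha-1}(1-a) + a^{\alpha} - 1$. One verifies $h(1) = 0$ and $h'(a) = \alpha(\alpha-1)a^{\alpha-2}(1-a) \leq 0$ on $(0,1)$, so $h(a) \geq 0$ there, which forces $R'(a) \leq 0$ and hence monotonicity.

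Combining the two regimes, the supremum of $R$ over $[\varepsilon,\infty)$ equals $\max\{R(\varepsilon),\,\sup_{a>1}R(a)\}$; since $\sup_{a>1}R(a) \leq \alpha = R(1^-) \leq R(\varepsilon)$ (the last inequality again by monotonicity on $(0,1)$ applied down to $\varepsilon$), the overall supremum is $R(\varepsilon) = (1-\varepsilon^{\alpha})/(1-\varepsilon)$. Setting $\gamma(\varepsilon) = R(\varepsilon)^2 = \big((1-\varepsilon^{\alpha})/(1-\varepsilon)\big)^2$ and noting $\gamma(\varepsilon) \in [0,1)$ because $R(\varepsilon) < 1$ (as $a^\alpha$ for $\alpha<1$ satisfies $1-a^\alpha < 1-a$ when... more precisely $R(\varepsilon)<1$ follows from $1-\varepsilon^\alpha < 1-\varepsilon$ being false in general — rather one checks $R(\varepsilon)<1 \iff \varepsilon < \varepsilon^\alpha$, true for $\varepsilon\in(0,1)$), completes the proof that $\theta(t)=t^\alpha$ is $\varepsilon$-admissible with the claimed constant.
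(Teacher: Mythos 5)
Your proposal is correct and follows essentially the same route as the paper: both reduce, via the homogeneity $\theta(at)/\theta(t)=a^{\alpha}$, to the one-variable inequality $|1-a^{\alpha}|\le \gamma^{1/2}|1-a|$ and then show that the ratio $(1-a^{\alpha})/(1-a)$ attains its supremum over $[\varepsilon,\infty)$ at $a=\varepsilon$, where it is still less than $1$. The only difference is in the calculus details: the paper signs the derivative of the squared ratio on all of $[\varepsilon,\infty)$ using the mean value theorem, whereas you sign the numerator $h(a)=\alpha a^{\alpha-1}(1-a)+a^{\alpha}-1$ by differentiating once more on $(0,1)$ and handle $a>1$ separately via the tangent bound $a^{\alpha}-1\le\alpha(a-1)$ — both arguments are sound.
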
 
\begin{proof}
	Fix $0 < \alpha < 1$.
	We define the function  $ 	\vartheta: [\varepsilon, \infty) \to \R $ for $ t\neq 1 $ as
	\[
 \vartheta(t) = 
	\Big(\frac{1- t^{\alpha}}{1- t} \Big)^2,
	\]
	and $ \vartheta (1)=\al^{2} $.
	Using L'H\^opital's rule, we see that  $\vartheta$ is a continuously  differentiable  function on $[\varepsilon, \infty)$ whose derivative for $ t\neq 1 $ is given by 
	\[
	\vartheta^{\prime}(t) =
	\frac{2(t^{\alpha} -1 )}{(t-1)^3}  \big( 1 - t^{\alpha} - \alpha t^{\alpha - 1} (1 - t) \big),
	\]
	and $ \vartheta^{\prime} (1) =\alpha^2 (\alpha - 1)$.
	We claim that $\vartheta^{\prime}(t) < 0$ for all $t \in [\varepsilon, \infty)$. Since $\alpha < 1$, this is clearly true for $t = 1$. For $t >  1$, we use the mean value theorem in order to find some $\xi \in (1, t)$ such that 
	\[
	(1^{\alpha} - t^{\alpha}) = \alpha \xi^{\alpha -1} (1-t). 
	\]
	Now, since $t > 1$, $\xi < t$ and $\alpha < 1$, this yields
	\[
	1 - t^{\alpha} - \alpha t^{\alpha - 1}(1-t) < 0,
	\]
	which clearly shows that $\vartheta{'}(t) < 0$. 
For $t < 1$, we proceed in the same way in order to obtain $\vartheta{'}(t) < 0$ in this case as well. 
	
	In conclusion, we have shown that $\vartheta$ is strictly monotonically decreasing. This shows that for all $ t \in [\varepsilon, \infty) $ the inequality 
	\[
	(1 - t^{\alpha})^2 \leq \gamma(\alpha, \varepsilon)  (1 - t)^2
	\]
	is satisfied for 
	\[
	\gamma(\alpha, \varepsilon) := \vartheta(\varepsilon) = \left( \frac{1- \varepsilon^{\alpha}}{1- \varepsilon} \right)^2.
	\]
	The observation that $\vartheta(\varepsilon) < \vartheta(0) = 1$ finishes the proof. 
\end{proof}
	\begin{proof}[Proof of Corollary~\ref{thm:Rellichbounded}]	It follows from Lemma~\ref{lemma:sqdominatedxalpha} that $\theta$ given by $ \theta(t)=t^{\alpha} $ is $\eps$-admissible for $0<\eps < 1$. Hence, we can apply Theorem~\ref{thm:RellichGENGLOB} in order to obtain the claimed Rellich-type inequality. Note that Lemma~\ref{lemma:sqdominatedxalpha} also shows that $\gamma$ is an  admissible constant for $\varepsilon = D^{-1/2}$. 
\end{proof}
We apply the above findings to the line graph, as well as to the standard graph on $\Z^d$, where $d \geq 3$.
\begin{eg} \label{eg:linegraphroot}
	We consider the  line graph on $\N_{ 0}:=\N\cup \{0\}$ with standard weights where $k \sim l$ if and only if $|k-l|= 1$. Then the function $n:\N_{ 0} \to \N_{ 0}$ given by  $n(k) = k$ is harmonic in $\N$. We apply Corollary~\ref{thm:Rellichbounded} with  $K=\{0\}$ and  $ D=2 $. Therefore, for a given $0 < \alpha < 1$, we get the validity of a Rellich-type inequality. Let us explicate all components of this inequality. 
	Firstly, the   Hardy weight   
	$$w:\N \to (0,\infty), \qquad w=\frac{\Delta(n^{1/2})}{n^{1/2}} $$
	arising from the supersolution construction of $ n $ is optimal and can be explicitly computed (confer \cite{KePiPo3}) to be 
	\begin{align*}
	w(k)=2-\left(1+\frac{1}{k}\right)^{1/2}-\left(1-\frac{1}{k}\right)^{1/2}=
{\sum_{l=1}^{\infty} \binom{4l}{2l} \frac{1}{(4l-1)2^{4l-1}} \frac{1}{k^{2l}}} > \frac{1}{4k^{2}}
	\end{align*}
	for $ k\ge2 $, and $ w(1)=
	2 - \sqrt{2} >1/4$.
	Furthermore,  Corollary~\ref{thm:Rellichbounded}  provides a  constant for the Rellich-type inequality by
	\begin{align*}
		\gamma = \Bigg( \frac{1- D^{-\alpha/2}}{1 - D^{-1/2} } \Bigg)^2 = \Bigg( \frac{1- 2^{-\alpha/2}}{1 - 2^{-1/2} } \Bigg)^2.
	\end{align*}
	It turns out that for any $0<\alpha<1$, we have $0<\gamma <1$. We finally obtain the Rellich-type inequality
	\[
	\| 1_{\ph}\Delta \varphi\|_{ \frac{n^{\alpha}}{w}} \geq (1-\gamma) \| \varphi\|_{w {n^{\alpha}} }
	\]
	for all $\varphi\in C_{c}(\N)$ which extend to $ \N_{0} $ by $\varphi(0)= 0$, where $n$, $ \gamma $ and $w$ are given above.

	Using the basic estimate $ w(k)\ge (2k)^{-2} $, we get for any $0\leq \alpha <1$
	 $$  \sum_{k=1}^{N}k^{\al+2}(\Delta \ph)^{2}(k)\ge \frac{(1-\gamma)^{2}}{16} \sum_{k=1}^{N}k^{\al-2}\ph^{2}		(k), $$
	 where $ \supp\ph\subseteq\{1,\ldots,N\}$. By the virtue of the mean value theorem one can further estimate $ \al ^{2}<\gamma < \al^{2}2^{1-\al}< 1 $. In contrast, the classical case of the Dirichlet Laplacian on $ (0,\infty) $ yields a constant $ \gamma =\al^{2} $, confer \cite[Example~10.4]{DFP}.
\end{eg}
Next, we give an explicit Rellich-type inequality for the standard graph on $ \N^{d}_{0}\subseteq  \Z^d$, where  $d \geq  2$.
\begin{eg}\label{eg:quadrant} For  $d\geq 2$, consider the standard  subgraph  $ \N^{d}_{0} $ of $ \Z^{d} $,  and let $ K:=\Z^{d}\setminus\N^{d} $. Then, the function $ x: \N^{d}_{0}\to[0,\infty) $ given by $ x(k)=k_{1}\cdot\ldots\cdot k_{d} $  is harmonic in $ \N^{d} $  and vanishes on $ K $. The supersolution construction yields a Hardy weight  $ w_{d}:\N^{d}\to (0,\infty) $
	$$ w_{d}(k): =\frac{\Delta(x(k)^{1/2})}{x(k)^{1/2}} =w(k_{1})+\ldots+ w(k_{d}) > \frac{1}{4}\left(\frac{1}{k_{1}^2}+\ldots+\frac{1}{k_{d}^2}\right),  $$
	where $ w $ is the Hardy weight on $ \N $ from Example~\ref{eg:linegraphroot} above. Since the vertex degree is $ 2d $, we obtain a Rellich inequality for $ 0<\al<1 $ and all $ \varphi \in C_c(\N^d) $
	\begin{align*}
	\left\|1_{\ph}\Delta \ph\right\|_{\frac{ x^{\al}}{w_{d}}}\ge \left(1-\Bigg( \frac{1- (2d)^{-\alpha/2}}{1 - (2d)^{-1/2} } \Bigg)^2\right)	\left\| \ph\right\|_{x^{\al}w_{d}} .
	\end{align*}
	For $ {k}_{j}\to\infty $  (leaving all other coordinates fixed)  the asymptotics of the weights on both sides of the inequality are $ {k}_{j} ^{\al}$. On the other hand, letting $ k_{1}=\ldots= k_{d}=t $ the asymptotics are $ {4}t^{\al d+2}/{d} $ on the left hand side and $ {d}t^{\al d-2}/{4} $ on the right hand side. 
\end{eg}
We conclude the section with an explicit Rellich-type inequality for the standard graph on the lattice $X = \Z^d$, where $d \geq 3$.
\begin{eg}\label{ex:Zd}
	We consider the standard graph on the lattice $X = \Z^d$, for $d \geq 3$, where two elements $k,l \in \Z^d$ are connected via an edge if and only if
	$| k-l | = 1$.  Let $G:X\times X\to (0,\infty)$ be the (positive minimal) Green function of the (standard) Laplacian on $X$  which is given by 
	\begin{align*}
	G(k,l):=\sum_{n=0}^{\infty}p_{n}(k,l), \qquad k,l\in \Z^{d},
	\end{align*}
	where $p_{n}(k,l)$ are the matrix elements of the $n$-th power of
	the transition matrix given by the matrix elements
	$p_{1}(k,l)=1/2d$ if $k\sim l$ and $0$ otherwise.
	Denote $G(k) := G(k,0)$.
	By the supersolution construction it follows that
	\[
	w = \frac{\Delta G^{1/2}}{G^{1/2}}
	\]
	gives rise to a strictly positive  Hardy weight for the Laplacian (which is in fact optimal, \cite{KePiPo2}). 
	We derive the Rellich-type inequality, Corollary~\ref{thm:Rellichbounded}, for all $ \varphi \in C_c(\Z^d) $
	\[
	\Big\| 1_{\supp \ph}\Delta \varphi  \Big\|_{\frac{G^{\alpha}}{w}} \geq \left( 1 - \left( \frac{1 - (2d)^{-\alpha/2}}{1-(2d)^{-1/2}} \right)^2 \right)  \| \varphi\|_{G^{\alpha}w} ,
	\]
	where $0<\alpha<1$. 
	
	For the reader who is interested in the asymptotics, we mention the corresponding asymptotics of $ w $ and $ G .$ Specifically,  \cite[Theorem~7.2]{KePiPo2} shows that $w$ has the
	asymptotic behavior 
	\[
	w(k) = \frac{(d-2)^2}{4}  \frac{1}{|k|^2} + \mathcal{O}\big( |k|^{-3} \big) \qquad \mbox{as } |k| \to \infty.
	\]
	 Furthermore, by \cite[Theorem~2]{Uch98} one has
	\[
	G(k) = \frac{C_1(d)}{|k|^{d-2}} + C_2(d)\left(\left ( {\sum_{i=1}^d \Big(\frac{k_i}{|k|} \Big)^4}\right) - \frac{3}{d+2}\right )\frac{1}{|k|^d}
	+ \mathcal{O}\left( \frac{1}{|k|^{d+2}} \right),
	\]
	where $C_1(d)$ and $C_2(d)$ are positive constants depending only on $d$. For $ d\ge 5 $, we  recover  the correct asymptotics of the weights with respect to the classical Rellich inequality. Specifically, we can choose $ \al=2/(d-2)<1 $ for $ d\ge 5 $ and get $$  G^{\al}/w \asymp C \quad\mbox{and}\quad G^{\al}w \asymp C'\frac{1}{|k|^{4}}  $$ as $    |k|\to \infty $ with positive constants $ C $ and $ C' $.
\end{eg}

\section{The function $\theta(t) = \log(t+1)$}\label{sec_log} 
In the present section we show that for all $c > 0$ and $0<\eps<1$, the function
$$\theta:[c, \infty) \to [0, \infty), \quad  \theta(t) := \log(t+1)$$ 
is $\eps$-admissible. As a consequence, one gets a Rellich-type inequality via the supersolution construction if the corresponding positive supersolution $u$ is bounded away from zero. This leads to the following corollary of Theorem~\ref{thm:RellichGENGLOB}.

\begin{cor} \label{thm:Rellichboundedlog}
	Let $b$ be a  graph over $X$ with standard weights, and  vertex degree bounded by $ D $.  Let $ u $ be a  strictly positive superharmonic which is bounded from below by $ c>0 $ on 
	$ X\setminus K $, and which vanishes on a set $ K \subset X$. Assume that the Hardy weight 
	$w={\Delta(u^{1/2})}/{u^{1/2}} $
	is strictly positive on $ X\setminus K $.
	Then for all $ \ph\in C_{c}(X\setminus K) $
	\[
	\Big \|1_{\ph}  \Delta \varphi \Big\|_{ \frac{\log^{2} (u^{1/2}+1)}{w}} \geq (1-\gamma)  \|   \varphi \|_{w\log^{2} (u^{1/2}+1) }  \,,
	\]
	where $ 0<\gamma<1 $ is given by
	\begin{align*}
	\gamma = \frac{\left( 1 - \frac{ \log(c^{1/2}  D^{-1/2} + 1)}{\log(c^{1/2} + 1)} \right)^2}{\big(1- D^{-1/2}\big)^2}.
	\end{align*}
\end{cor}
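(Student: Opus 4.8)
The plan is to follow the blueprint of Section~\ref{sec_xalpha} verbatim: first establish an analogue of Lemma~\ref{lemma:sqdominatedxalpha} showing that $\theta(t)=\log(t+1)$ is $\eps$-admissible on $[c_{0},\infty)$ for every $c_{0}>0$ and $0<\eps<1$, with admissible constant
\[
\gamma(\eps)=\left(\frac{1-\frac{\log(\eps c_{0}+1)}{\log(c_{0}+1)}}{1-\eps}\right)^{2},
\]
and then invoke Theorem~\ref{thm:RellichGENGLOB} with $\eps=D^{-1/2}$ and $c_{0}=c^{1/2}$. Since $u\ge c$ on $X\setminus K$ forces $u^{1/2}\ge c^{1/2}$, only values $t=u^{1/2}(x)\ge c^{1/2}$ and $at=u^{1/2}(y)\ge c^{1/2}$ (with $x\sim y$ in $X\setminus K$) enter the estimate, so admissibility on $[c^{1/2},\infty)$ is exactly what Theorem~\ref{thm:RellichGENGLOB} requires; substituting $c_{0}=c^{1/2}$ and $\eps=D^{-1/2}$ into the displayed formula then produces precisely the claimed $\gamma$. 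Thus everything reduces to the admissibility lemma. (This also explains the hypothesis $u\ge c>0$: the bound below degenerates as $c_{0}\to 0$, where $\gamma(\eps)\to 1$.)

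To prove admissibility I would reformulate the defining inequality $|\theta(t)-\theta(at)|\le\gamma^{1/2}|1-a|\theta(t)$ as the bound $F(t,a)\le\gamma(\eps)^{1/2}$, where
\[
F(t,a):=\frac{\left|\log(t+1)-\log(at+1)\right|}{|1-a|\,\log(t+1)}\qquad(t\ge c_{0},\ a\ge\eps),
\]
extended continuously at $a=1$ by $F(t,1)=t/\left((t+1)\log(t+1)\right)$. The function $F$ is well defined and continuous on the box $[c_{0},\infty)\times[\eps,\infty)$, and since the admissibility constraint $at\ge c_{0}$ only shrinks this box, it suffices to maximise $F$ over the whole box. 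I claim the maximum sits at the corner $(c_{0},\eps)$, where a direct computation gives $F(c_{0},\eps)=\gamma(\eps)^{1/2}$. This follows from two monotonicity statements. First, $F$ is non-increasing in $a$ for each fixed $t$: with $h(a)=\log(at+1)$ one has $h''(a)=-t^{2}/(at+1)^{2}<0$, so $h$ is concave and its secant slope $\left(h(a)-h(1)\right)/(a-1)$ from the base point $a=1$ is positive and non-increasing in $a$; up to the positive factor $1/\log(t+1)$ this slope is exactly $F(t,a)$.

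The main technical point is that $F$ is also non-increasing in $t$ for each fixed $a$. For $a\neq 1$ write $F(t,a)=|1-R(t)|/|1-a|$ with $R(t):=\log(at+1)/\log(t+1)$, so it suffices to show $R$ is increasing in $t$ when $a<1$ and decreasing when $a>1$. A short computation reduces the sign of $R'(t)$ to the sign of
\[
\Psi(a):=a(t+1)\log(t+1)-(at+1)\log(at+1).
\]
Here $\Psi(0)=\Psi(1)=0$, and $\Psi''(a)=-t^{2}/(at+1)<0$, so $\Psi$ is concave on $[0,\infty)$; a concave function vanishing at $0$ and $1$ is $\ge 0$ on $[0,1]$ and $\le 0$ on $[1,\infty)$, which is exactly the required monotonicity of $R$, hence of $F$, in $t$. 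Combining the two monotonicities yields $F(t,a)\le F(c_{0},a)\le F(c_{0},\eps)=\gamma(\eps)^{1/2}$ throughout the box.

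It remains to check $\gamma(\eps)<1$, i.e.\ $F(c_{0},\eps)<1$, which unwinds to the inequality $\eps c_{0}+1>(c_{0}+1)^{\eps}$. This holds because $\eps\mapsto \eps c_{0}+1-(c_{0}+1)^{\eps}$ vanishes at $\eps=0$ and $\eps=1$ and is concave, its second derivative $-(c_{0}+1)^{\eps}(\log(c_{0}+1))^{2}$ being negative; hence it is strictly positive on $(0,1)$. With the admissibility lemma and its constant in hand, Theorem~\ref{thm:RellichGENGLOB} applied with $\eps=D^{-1/2}$ delivers the assertion. The only genuinely delicate step is the monotonicity of $F$ in $t$ via the auxiliary function $\Psi$; the monotonicity in $a$, the evaluation at the corner, and the bound $\gamma<1$ are all short convexity arguments.
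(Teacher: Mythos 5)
Your proposal is correct, and it follows the same skeleton as the paper: reduce the corollary to an admissibility lemma for $\theta(t)=\log(t+1)$ on $[c^{1/2},\infty)$ with the stated constant, then invoke Theorem~\ref{thm:RellichGENGLOB} with $\eps=D^{-1/2}$ -- this is precisely the paper's Lemma~\ref{lemma:sqdlog} followed by its two-line proof of Corollary~\ref{thm:Rellichboundedlog}. Where you genuinely differ is in how the lemma is proved. The paper works with $\vartheta(a,t)=F(t,a)^{2}$, computes $\partial_{t}\vartheta$ and $\partial_{a}\vartheta$ explicitly, and settles their signs by a case analysis ($a>1$ versus $a<1$) using the mean value theorem and a tangent-line estimate for the convex function $a\mapsto(at+1)\log(at+1)$; it then obtains $\gamma<1$ from $\lim_{a\to0}\vartheta(a,c)=1$ together with strict monotonicity in $a$. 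You instead get monotonicity in $a$ from concavity of $a\mapsto\log(at+1)$ via the secant-slope (three-chord) characterization, monotonicity in $t$ from the concave auxiliary function $\Psi$ vanishing at $a=0$ and $a=1$ (your $\Psi(a)$ is exactly the negative of the paper's $f(a)-af(1)$, so the two arguments analyze the same quantity), and $\gamma<1$ from concavity of $\eps\mapsto\eps c_{0}+1-(c_{0}+1)^{\eps}$. Your route is shorter and avoids the case distinctions; the only loose end is the unstated monotonicity of $F(\cdot,1)$ in $t$, which follows by continuity from the $a\neq1$ case (or by noting $\log(t+1)<t$), and does not affect correctness. You also handle the domain issue correctly: the admissible constant is a bound over the full box $[c_{0},\infty)\times[\eps,\infty)$, which dominates the constrained set $\{at\ge c_{0}\}$ appearing in the definition, and only values of $u^{1/2}$ on $X\setminus K$ enter via Theorem~\ref{thm:RellichGENGLOB}, so the lower bound $c^{1/2}$ is the right threshold.
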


\begin{rem}Let $ u $ be a strictly positive superharmonic function.
	
	Note that the function  $f:t\mapsto\log(t^{1/2}+1)$ used in the theorem above is  concave and increasing on $(0,\infty) $. Thus, $ f\circ u $ is a positive superharmonic function which explains the choice of the weight $ g $ in the theorem above. 
	
	Moreover, since the function $ t\mapsto t^{\al} $ is concave and increasing for $ 0\leq \al \leq 1 $ on $ [0,\infty) $ the function $ u^{\al} $ is superharmonic. Hence, the theorem above can be applied replacing $ u $ by $ u^{\al} $ which leads to a weight $ \log^{2}(u^{\al/2}+1) $ and a Hardy weight $ w=\Delta (u^{\al/2})/u^{\al/2} $. It is also not hard to check that a corresponding Rellich-type inequality holds with the constant $ \gamma $ taken from the proof independently of $ \alpha $.
\end{rem}

In order to verify the above corollary, we prove the following lemma.

\begin{lemma}  \label{lemma:sqdlog}
The function $\theta:[c, \infty) \to [0, \infty)$, $\theta(t) := \log(t+1)$ is admissible for every $0<\varepsilon <1$ and $ c>0 $ with   admissible constant 
	\[
\gamma( \varepsilon) = \frac{\left( 1- \frac{\log(c\varepsilon + 1)}{\log(c + 1)} \right)^2}{(1-\varepsilon)^2}\, .
	\]
\end{lemma}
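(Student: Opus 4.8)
The plan is to reduce the admissibility inequality to a one-variable monotonicity statement and read off the constant from a boundary value. Since $\theta(t)=\log(t+1)$ is strictly increasing, for $a\ge\varepsilon$ the numerator $\theta(t)-\theta(at)$ and the factor $1-a$ always share the same sign, so the quantity
\[
F(t,a):=\frac{\theta(t)-\theta(at)}{(1-a)\,\theta(t)}=\frac{\log(t+1)-\log(at+1)}{(1-a)\log(t+1)}
\]
is nonnegative and equals $|\theta(t)-\theta(at)|/(|1-a|\,\theta(t))$. Hence admissibility with constant $\gamma$ is exactly the assertion $F(t,a)\le\gamma^{1/2}$ for all $t\ge c$ and $a\ge\varepsilon$ (with $at\ge c$). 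I would first observe that the claimed $\gamma$ satisfies $\gamma^{1/2}=F(c,\varepsilon)$, i.e.\ it is the value of $F$ at the corner $(c,\varepsilon)$; the whole task is therefore to show that this corner value dominates $F$ everywhere on the admissible region.

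First I would freeze $t$ and vary $a$. The map $a\mapsto g(a):=\log(at+1)$ is concave on $[0,\infty)$ (one checks $g''(a)=-t^2/(at+1)^2<0$), and for a concave function the difference quotient $a\mapsto(g(1)-g(a))/(1-a)$ is non-increasing. Consequently $F(t,\cdot)$ is non-increasing, so for every $a\ge\varepsilon$,
\[
F(t,a)\le F(t,\varepsilon)=:\Psi(t)=\frac{\log(t+1)-\log(\varepsilon t+1)}{(1-\varepsilon)\log(t+1)}.
\]
This bound needs only $a\ge\varepsilon$ and uses the analytic function $\log(x+1)$ on all of $[0,\infty)$, so the side constraint $at\ge c$ plays no role and may be dropped: discarding it only enlarges the region while keeping the estimate valid.

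The crux is the second step: showing $\Psi$ is non-increasing on $[c,\infty)$, so that $\Psi(t)\le\Psi(c)=\gamma^{1/2}$. Equivalently $R(t):=\log(\varepsilon t+1)/\log(t+1)$ must be non-decreasing. Differentiating, the sign of $R'(t)$ is that of $\varepsilon(t+1)\log(t+1)-(\varepsilon t+1)\log(\varepsilon t+1)$. Writing $A=t+1$ and $B=\varepsilon t+1$, one has the algebraic identity $B=\varepsilon A+(1-\varepsilon)\cdot 1$, i.e.\ $B$ is the convex combination of $A$ and $1$ with weight $\varepsilon$. Applying convexity of $\phi(x)=x\log x$ (with $\phi(1)=0$) gives $\phi(B)\le\varepsilon\phi(A)+(1-\varepsilon)\phi(1)=\varepsilon A\log A$, which is precisely $R'(t)\ge 0$. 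This convexity reduction is the key insight and the main obstacle: the inequality is genuinely two-variable, and it is only after decoupling the $a$-dependence that the residual $t$-monotonicity collapses to Jensen's inequality for $x\log x$.

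Combining the two steps yields $F(t,a)\le\Psi(t)\le\Psi(c)=\gamma^{1/2}$, which is the admissibility inequality. It then remains to confirm $\gamma=\Psi(c)^2\in[0,1)$: nonnegativity is clear, and $\gamma^{1/2}<1$ is equivalent to $(c+1)^{\varepsilon}<\varepsilon c+1$, which is the strict form of Bernoulli's inequality $(1+c)^{\varepsilon}\le 1+\varepsilon c$ for $0<\varepsilon<1$ and $c>0$. This shows that $\gamma(\varepsilon)$ is an admissible constant and completes the proof.
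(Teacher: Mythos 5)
Your proof is correct, and while it shares the paper's overall skeleton (bound the ratio $|\theta(t)-\theta(at)|/(|1-a|\theta(t))$ by showing it is monotone in both variables and then evaluating at the corner $(t,a)=(c,\varepsilon)$), the individual monotonicity arguments are genuinely different. The paper works with $\vartheta(a,t)=F(t,a)^2$ and establishes $\partial_t\vartheta\le 0$ and $\partial_a\vartheta<0$ by explicitly computing both partial derivatives and controlling their sign factors with mean-value-theorem/tangent-line estimates for $f(a)=(at+1)\log(at+1)$; it then gets $\gamma<1$ from strict $a$-monotonicity together with $\lim_{a\to 0}\vartheta(a,c)=1$. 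You instead obtain the $a$-monotonicity from the secant-slope characterization of concavity applied to $a\mapsto\log(at+1)$, and the $t$-monotonicity (needed only on the slice $a=\varepsilon$, a small economy over the paper) from Jensen's inequality for $\phi(x)=x\log x$ applied to the convex combination $\varepsilon t+1=\varepsilon(t+1)+(1-\varepsilon)\cdot 1$; you close with strict Bernoulli for $\gamma<1$. Your route avoids the paper's case distinctions ($a>1$ versus $a<1$) and the separate treatment of the removable singularity at $a=1$, at the cost of invoking the standard convexity facts as black boxes; the paper's computation is more self-contained but messier. Your remark that the constraint $at\ge c$ can be discarded (since the estimate is proved on the larger region $t\ge c$, $a\ge\varepsilon$) is sound and matches what the paper implicitly does.
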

\begin{proof}
We define  the function $ \vartheta:(0, \infty)\times(0,\infty) \to (0, \infty) $
\begin{align*}
 \vartheta(a,t) &= 
 \frac{\left( 1-  \frac{\log(at + 1)}{\log(t + 1)}\right)^2}{(1-a)^2}\, , 
 \end{align*}
 for $ a \neq 1 $
 and
 \begin{align*} \vartheta(1,t)&=
 \frac{t^2}{(t+1)^2 \log^2(t+1)}\, .	
\end{align*}
With this definition we have
\begin{align*}
(\log(at + 1)-\log(t + 1))^{2}= \vartheta(a,t)(1-a)^{2}\log^{2}(t+1).
\end{align*}
We show that $ \vartheta $ is monotone decreasing in both parameters and strictly monotone decreasing in the first one. Together with the fact  $  \lim_{a\to 0}\vartheta (a,c)=1$ this yields that $ \theta  $ is admissible with admissible constant $ \gamma=\vartheta(\eps,c) $ as then
\begin{align*}
\vartheta (a,t)\leq \vartheta (\eps,c)<1
\end{align*}
for $ a\ge \eps $ and $ t\ge c $. The monotonicity is proven by two claims.
\medskip

\emph{Claim 1:} We have $\partial_{t}\vartheta\leq 0$.

\emph{Proof of Claim~1:} First of all observe that $(1-a)^{2}\vartheta(a,\cdot) = 0$ at $ a=1 $ so there is nothing to show for $a=1$.  For $ a\neq 1 $, one computes
	\begin{equation*} 
	\partial_{t}\vartheta(a,t) = \frac{2}{(1-a)^{2}}  \left(\! 1- \frac{\log(at+1)}{\log(t+1)} \!\right)\!\!  \left(\! \frac{\log(at+1)}{(t+1) \log^2(t+1)} - \frac{a}{(at+1)\log(t+1)} \right).
	\end{equation*}
	
	We distinguish the cases $a > 1$ and $a < 1$. 
	So let $a > 1$. Clearly, in this case, the second factor in the above expression is less or equal than $0$. So, we need to show that the third factor is greater or equal than $0$. Indeed, the third factor can be expressed as
	\begin{align*}
	\frac{(at+1)\log(at+1)-a(t+1)\log(t+1)}{(t+1)(at+1)\log^{2}(t+1)}=\frac{f(a)-af(1)}{(t+1)(at+1)\log^{2}(t+1)}\, ,
	\end{align*}
	with $ f(a)=(at+1)\log(at+1) $. Hence, it suffices to show $ f(a)-af(1)\ge 0 $ for $ a>1$. We use the basic estimate $$  f(a)\ge (a-1)\inf_{b>1}f'(b)+f(1),  $$  together with $$  f'(b)=t(1+\log(bt+1))\ge t(1+\log(t+1)),  $$ to estimate
	\begin{align*}
	f(a)-af(1)&\ge (a-1)t(1+\log(t+1)) +(1-a)(t+1)(\log(t+1))\\
	&=(a-1)(t-\log (t+1))\ge0.
	\end{align*}
This shows $ \partial_{t}\vartheta (a,t)\ge0 $ for $ a>1 $.

The proof for $ a<1 $ is similar. Indeed, the second term of $ \partial_{t}\vartheta(a,t) $ is positive for $ a<1 $. The third term can be seen to be negative by the corresponding upper bound on $ f(a)-af(1)\leq (a-1)(t-\log(t+1))\leq 0 $ for $ a<1 $. This finishes the proof of Claim~1.

\medskip

Next, we show that $ \vartheta(\cdot,t) $ is strictly monotonically decreasing for fixed $ t $.
	
	\medskip
	
\emph{Claim 2:} We have	 $\partial_{a}\vartheta<0$. 

\emph{Proof Claim~2:}  One directly computes the derivative of $ \vartheta  $ for $ a\neq 1 $
\begin{align*}
\partial_{a}\vartheta(a,t) \!=\! 
\frac{2\big( \log(t \!+\! 1) - \log(at\!+ \!1) \big)  \big( t(a\!-\!1) + (at \!+\! 1)(\log(t \!+\! 1) - 
	\log(at \!+\! 1)) \big)}{(1-a)^3(at + 1) \log^2(t + 1)}, 			
\end{align*}
	and
\begin{align*}
\partial_{a}\vartheta(1,t)=- \frac{t^3}{(t+1)^3 \log^2(t+1)}\,.
\end{align*}
Clearly, $\partial_{a}\vartheta(1,t)<0 $. Suppose that  $a > 1$. Then, by the mean value theorem  we obtain
	\[
	{ \log(t + 1)}-\log(at + 1) < -\frac{t(a-1) }{(at + 1)}\, .
	\]
Hence, the first factor of the enumerator in the expression of $ \partial_{a}\vartheta(a,t) $ is negative and the second factor is negative as well since
\begin{align*}
t(a-1) + (at + 1)(\log(t + 1) &-  \log(at + 1)) < t(a-1)\left(1- \frac{ (at + 1)}{(at + 1)}\right)= 0
\end{align*}
for $ a>1 $. Moreover, the denominator is negative. The case $a < 1$ is proved in a similar way. 
	We conclude that $\vartheta(\cdot,t) $ is strictly monotonically decreasing. This proves the Claim~2.
	
	 By the discussion above the proof is complete.
\end{proof}

	\begin{proof}[Proof of Corollary~\ref{thm:Rellichboundedlog}]
	 We denote the positive lower bound of $u$ by $c$. It follows from Lemma~\ref{lemma:sqdlog} that $\theta$ given by $\theta(t)=\log(t+1) $ is admissible on the interval  $[c^{1/2}, \infty)$. Hence, we can apply Theorem~\ref{thm:RellichGENGLOB} in order to obtain the Rellich-type inequality.  
	\end{proof}

\begin{rem}
		Unlike the case where we dealt with the admissible function 
	$\theta(t) = t^{\alpha}$, for $\theta(t) = \log(t+1)$ there is another parameter $c$ (the lower bound of the superharmonic function $u$ in question) besides $\varepsilon$ determining the admissibility constant $\gamma$, see Lemma~\ref{lemma:sqdlog}.  
\end{rem}

As in the previous section, we apply the above theorem to the standard line graph in $\N$. 

\begin{eg}\label{eg:linegraphroot2}
We return to the Example~\ref{eg:linegraphroot} for the standard line graph on $\N_{ 0}$ with the  Hardy weight $w$ given by $ w=
|\nabla (n^{1/2})|^{2}/n $ arising from the harmonic function $n(k)=k$. With $D = 2$ and $c= 1$, we define  
 \begin{eqnarray*}
 	\gamma := \frac{\Big( 1 - \frac{\log(2^{-1/2} + 1)}{\log 2} \Big)^2}{(1-2^{-1/2})^2} \approx 0.6083.
 	\end{eqnarray*}
  Hence, Corollary~\ref{thm:Rellichboundedlog} with $\theta(t)=\log(t+1) $ leads to the Rellich-type inequality
 \[
 \left\| 1_{\ph}(\Delta \varphi)  \right\|_{\frac{\log^{2}(n^{1/2} + 1)}{w}} \geq \left( 1 -  \gamma \right) \left\| \varphi\right\|_{\log^{2}(n^{1/2} + 1) w},
 \]
 for all $\varphi \in C_c(\N_{ 0})$ with $\varphi(0) =0$,
 where $\log(n^{1/2} + 1)$ takes the role of $g^{1/2} = \theta\circ\big( n^{1/2}\big)$.
\end{eg}

\begin{eg} We revisit the quadrant of Example~\ref{eg:quadrant}. So, we consider the subgraph  $ \N^{d}_{0} $ of $ \Z^{d} $,  the set $ K:=\Z^{d}\setminus\N^{d} $, the positive harmonic function  $ x: \N^{d}_{0}\to[0,\infty) $ given by $ x(k)=k_{1}\cdot\ldots\cdot k_{d} $, and the Hardy weight $ w_{d} $ arising from the positive supersolution $ x^{1/2} $. Hence, 
	$ w_{d}(k)  =w(k_{1})+\ldots +w(k_{d}) $,
	where $ w $ is the Hardy weight of $ \N_{0} $ from Example~\ref{eg:linegraphroot2} above. Thus, we get a Rellich inequality
	\begin{align*}
	\left\|1_{\ph}\Delta \ph\right\|_{\frac{\log^{2}(x^{1/2}+1)}{w_{d}}}\ge \left(1-\gamma\right)	\left\| \ph\right\|_{\log^{2}(x^{1/2}+1)w_{d}}
	\end{align*}
for all $\varphi \in C_c(\N^{d})$ with $ \gamma $ chosen according to Corollary~\ref{thm:Rellichboundedlog}.
\end{eg}

\section{Schr\"odinger operators on discrete measure spaces}\label{sec-Schr} 
In the present section we extend the Rellich inequalities obtained in the previous sections for the Laplacian,  to the more general setting of Schr\"odinger operators on discrete measure spaces. Specifically, we allow for a measure on $ X $ and we add a potential to $ \Delta $. It turns out that by the virtue of the ground state transform this case reduces to the case of the Laplacian above.

We consider a graph $ b $ over $ X $ equipped with a measure $ m $ of full support, and let $ q:X\to \R $  be a given potential. The Schr\"odinger operator $ H=\frac{1}{m}\Delta +q$ acts on $ \mathcal{F}(X) $ as 
\begin{align*}
Hf(x):=\frac{1}{m(x)}\sum_{y\in X}b(x,y)(f(x)-f(y))+q(x)f(x).
\end{align*}
With this definition we also have a  Green's formula (confer \cite[Lemma~4.7]{HK})
for $\ph \in C_c(X)$ and $f \in \mathcal{F}(X)$ 
\begin{equation*}
\frac{1}{2}\sum_{X\times X}b\nabla \ph \nabla f+\sum_{X}mq\ph f=	\sum_{X}m \ph H f = 	\sum_{X} mf H \ph .
\end{equation*}
 We say that a  {\em Hardy inequality} with Hardy weight $ w $ is satisfied for $ H $ on $\ell^{2} (X,m) $ if  for all  $\ph\in C_{c}(X)$
\begin{align*}
\sum_{X}(|\nabla \ph|^{2}+mq\ph^{2})\geq	\| \ph\|_{mw}^2.
\end{align*}
 We prove the following theorem.

\begin{thm}[Abstract Rellich-type inequality for Schr\"odinger operators] \label{thm:abstractRellichSchrodinger}
	Let $b$ be a graph over $(X,m)$, and  let $ q $ be a  potential. Suppose there is a strictly positive Hardy weight $ w $ for $ H $ on   $\ell^{2} (X,m) $.
	If there is  a strictly positive function    $g \in \mathcal{F}(X)$ and $0 < \gamma < 1$ such that $g$ satisfies the  eikonal inequality
	\[
	\frac{|\nabla \left(g^{1/2}\right)|^2}{g}\leq \gamma  wm  \qquad \mbox{on }  X,
	\]	
	then the following Rellich inequality holds for all $  \varphi \in C_c(X)  $
	\[
	\left \|  1_{\ph} H \varphi \right\|_{\frac{g}{w}m} \geq  (1-\gamma)  \|   \varphi \|_{gwm}.
	\]
\end{thm}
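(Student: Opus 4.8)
The plan is to reduce the Schrödinger case to the already-proven Laplacian case via the ground state transform, exactly as the section's introduction announces. The key observation is that the strictly positive Hardy weight $w$ for $H$ on $\ell^2(X,m)$, together with the Allegretto--Piepenbrink-type correspondence, furnishes a strictly positive generalized solution $h$ of $(H-w)h=0$, i.e. $\frac{1}{m}\Delta h +qh = wh$. Using $h$ as the ground state, I would pass from $H$ to a new graph $\widetilde b$ over $X$ defined by the $h$-transform $\widetilde b(x,y) := h(x)b(x,y)h(y)$, with new measure $\widetilde m := h^2 m$. The point of this construction is that the associated formal Laplacian $\widetilde\Delta$ satisfies the intertwining identity
\[
\frac{1}{h}\,\Delta(h\cdot)\ =\ \widetilde{\frac{1}{\widetilde m}\Delta}\ +\ \text{(diagonal term involving } q,w),
\]
so that $H\varphi = h^{-1}\,\widetilde\Delta_{\widetilde m}(h^{-1}\varphi)\cdot(\text{correction})$; more precisely the transform turns $H-w$ into the bare weighted Laplacian $\frac{1}{\widetilde m}\widetilde\Delta$ acting on $\varphi/h$.

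Concretely, the steps in order are: (i) extract the positive solution $h$ of $(H-w)h=0$ from the Hardy inequality; (ii) define $\widetilde b,\widetilde m$ as above and record that the Hardy inequality for $H$ with weight $w$ becomes, after substituting $\varphi = h\psi$, a Hardy inequality for the weighted Laplacian $\frac{1}{\widetilde m}\widetilde\Delta$ with weight $w$ on $(X,\widetilde m)$ — this is the standard ground state representation $\sum_X(|\nabla\varphi|^2 + mq\varphi^2) - \sum_X mw\varphi^2 = \frac{1}{2}\sum_{X\times X}\widetilde b(\nabla\psi)^2$; (iii) verify that the eikonal hypothesis $|\nabla g^{1/2}|^2/g \le \gamma wm$ transcribes into the corresponding eikonal inequality for the transformed graph, checking that the weight combination $g/w$ and $gw$ on the two sides of the Rellich inequality convert correctly under the $\widetilde m = h^2m$ reweighting; (iv) apply Theorem~\ref{thm:abstractRellich} (or its general form Theorem~\ref{thm:abstractRellich2}) to $\widetilde\Delta$ on $(X,\widetilde m)$; and (v) translate the resulting inequality back through $\varphi = h\psi$ to recover the claimed bound $\|1_\varphi H\varphi\|_{\frac{g}{w}m}\ge(1-\gamma)\|\varphi\|_{gwm}$, taking care that the cutoff $1_\varphi$ is preserved since $h>0$ means $\supp\psi = \supp\varphi$.

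The main obstacle I expect is step (iii)/(v): bookkeeping the measure-weighted norms so that the factors of $h$ and $m$ land exactly where the statement requires. In the bare-Laplacian theorem the norms are $\|\cdot\|_{g/w}$ and $\|\cdot\|_{gw}$ with counting measure, whereas here every inner product carries the measure $m$ and the transform inserts an extra $h^2$; one must confirm that these cancel to leave precisely $\frac{g}{w}m$ and $gwm$ rather than, say, $\frac{g}{w}h^2 m$. A cleaner route, which I would actually prefer and which sidesteps the transform, is to prove the theorem \emph{directly} by repeating the argument of Theorem~\ref{thm:abstractRellich2} verbatim but with the Schrödinger Green's formula displayed in this section in place of the Laplacian Green's formula. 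That is: apply the analogue of Lemma~\ref{lemma:main} adapted to $H$ (where the potential term $mq$ appears symmetrically and is absorbed by writing everything in terms of $\sum_X m(g\varphi)H\varphi$), invoke the Hardy inequality for $H$ and the eikonal bound to get $\sum_X m(g\varphi)H\varphi \ge (1-\gamma)\|\varphi\|_{gwm}^2$, and then close with Cauchy--Schwarz in $\ell^2(X,m)$:
\[
\sum_X m(g\varphi)H\varphi = \sum_X m(g\varphi 1_\varphi)H\varphi \le \|1_\varphi H\varphi\|_{\frac{g}{w}m}\,\|\varphi\|_{gwm}.
\]
This direct computation makes the measure-weighting transparent and is the approach I would carry out in full.
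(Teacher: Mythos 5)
Your preferred route (the direct one) is correct, and it is genuinely different from the paper's proof. The paper reduces to the Laplacian case: it takes a strictly positive $u$ with $Hu\ge 0$ (from Allegretto--Piepenbrink), performs the ground state transform to the graph $b(u\otimes u)$ with measure $u^2m$ and nonnegative potential $\tfrac{1}{u}Hu$, absorbs that potential into edge weights $b_\infty(x,\infty)=mu(Hu)$ joining a new vertex $\infty$, and then applies the $q=0$ version (Proposition~\ref{pro:q=0}, hence Theorem~\ref{thm:abstractRellich2}) on $X\subseteq X_\infty$ before translating back via $\ph=u\psi$. Your direct argument instead rewrites $\sum_X m(g\ph)H\ph=\sum_X(g\ph)\Delta\ph+\sum_X mq\,g\ph^2$, applies Lemma~\ref{lemma:main} with $f=g^{1/2}$ to get $\sum_X m(g\ph)H\ph=\sum_X\bigl(|\nabla(g^{1/2}\ph)|^2+mq(g^{1/2}\ph)^2\bigr)-\tfrac12\sum_{X\times X}b(\ph\otimes\ph)(\nabla g^{1/2})^2$, and then uses the Hardy inequality for $H$ at $g^{1/2}\ph\in C_c(X)$, Young's inequality with the pointwise eikonal bound, and Cauchy--Schwarz exactly as in Theorem~\ref{thm:abstractRellich2}. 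This checks out completely (the potential term passes through the Leibniz/Green computation untouched because $\ph$ has finite support), and it is shorter and more transparent about where each factor of $m$ lands; what the paper's route buys is a reusable reduction mechanism (the vertex-at-infinity construction) that converts any positive Schr\"odinger operator into a pure Laplacian on a supergraph.

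One caution: your \emph{first} sketched route, with $h$ the ground state of $H-w$, does not work as described. With that choice the ground state representation gives $\sum_X(|\nabla\ph|^2+mq\ph^2)-\sum_X mw\ph^2=\tfrac12\sum_{X\times X}\widetilde b(\nabla\psi)^2$, which only says $\widetilde\Delta$ is nonnegative; it does \emph{not} furnish a Hardy inequality with weight $w$ for $\widetilde\Delta$ on $(X,\widetilde m)$, so step (iv) has nothing to feed on. Moreover the intertwining identity sends $\tfrac{1}{\widetilde m}\widetilde\Delta\psi$ to $(H-w)\ph$, not to $H\ph$, so even a Rellich inequality for $\widetilde\Delta$ would bound $\|\ph\|$ by $\|1_\ph(H-w)\ph\|$ rather than by $\|1_\ph H\ph\|$. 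The paper sidesteps both problems precisely by transforming with a supersolution of $Hu=0$ (not of $(H-w)h=0$) and keeping the resulting nonnegative potential as extra edges. Since you committed to the direct computation, this does not affect your proof, but the transform-based plan would need that repair.
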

\begin{rem}
	In contrast to  Theorem~\ref{thm:abstractRellich2} and Corollary~\ref{cor:abstractRellich}, we do not consider subsets $ Y\subseteq X $ in the theorem above. The reason is that the case of subsets is implicitly included. Assume that  the assumptions of the theorem are fulfilled for a Schr\"odinger operator $ H=\frac{1}{m}\Delta +q $ only for a subset $ Y \subseteq X$. Let $ b_{Y} $ be the restriction $ b $ of to $ Y\times Y $,  and denote the corresponding Laplacian of $ b_{Y} $ on $ \mathcal{F}(Y) $ by $ \Delta_{Y} $. Let  $ q_{Y} $ and $ m_{Y} $ be the restrictions  of $ q $ and $ m $ to $ Y $. Furthermore, let $ q^{D}_{Y}:Y\to [0,\infty) $ be given by
	\begin{align*}
	q^{D}_{Y}(x):=\sum_{y\in X\setminus Y}b(x,y),
	\end{align*}
	and define the Schr\"odinger operator $ H_{Y} =\frac{1}{m_{Y}}\Delta_{Y}+q_{Y}+q^{D}_{Y}$ on $ \mathcal{F}(Y) $ by
	\begin{align*}
	H_{Y}f(x)=\frac{1}{m_{Y}(x)}\sum_{y\in Y}b_{Y}(x,y)(f(x)-f(y))+(q_{Y}+q^{D}_{Y})(y)f(x).
	\end{align*}
		Then, one can check by a direct computation that 
	\begin{align*}
	H=H_{Y}\qquad \mbox{on }\mathcal{F}(Y).
	\end{align*}
	As the theorem above applies to $ H_{Y} $ it follows directly for $ H $ on $ Y $ as well. In the proof of Theorem~\ref{thm:abstractRellichSchrodinger} we use the ``reverse'' construction.
\end{rem}

In the next proposition, we show that we can deduce a Rellich inequality for operators $ H=\frac{1}{m}\Delta $ from the more general version of the abstract Rellich inequality, Theorem~\ref{thm:abstractRellich2}.

\begin{pro}[$ q= 0 $]\label{pro:q=0}
	Let $b$ be a graph over $(X,m)$, and let $ q =0$. Suppose that there exists a strictly positive  Hardy weight $ w $ for $ H $ on   $\ell^{2} (Y,m) $ with $ Y\subseteq X $, and  a strictly positive function    $g \in \mathcal{F}(Y)$ and $0 < \gamma < 1$ such that $g$ satisfies the weak eikonal inequality
for some (respectively, all) $ \ph\in C_{c}(Y) $
\[
\frac{1}{2}	\sum_{X\times X}b (\ph\otimes \ph)\left(\nabla \left(g^{1/2}\right)\right)^2\leq \gamma\| \ph\|^{2}_{ gwm}.
\]	
	Then, for such a (respectively, all) $ \ph\in C_{c}(Y) $,
	\[
	\left \|  1_{\ph} H \varphi \right\|_{\frac{g}{w}m} \geq  (1-\gamma)  \|   \varphi \|_{gwm}.
	\]
\end{pro}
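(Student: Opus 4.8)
The plan is to reduce the statement to the Laplacian version already established in Theorem~\ref{thm:abstractRellich2}, exploiting that $q=0$ makes $H=\frac{1}{m}\Delta$. The first step is to identify the correct Hardy weight for $\Delta$. Since $q=0$, the assumed Hardy inequality for $H$ on $\ell^{2}(Y,m)$ reads
\[
\sum_{X}|\nabla \ph|^{2}\ge \|\ph\|_{mw}^{2}=\sum_{X}mw\,\ph^{2}
\]
for all $\ph\in C_{c}(Y)$, so that $\tilde w:=mw$ is a strictly positive Hardy weight for the Laplacian $\Delta$ on $Y$ in the sense of the earlier definition.

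Next I would match the weak eikonal hypotheses. Because $\|\ph\|^{2}_{g\tilde w}=\sum_{X}g\tilde w\,\ph^{2}=\sum_{X}gmw\,\ph^{2}=\|\ph\|^{2}_{gwm}$, the assumed bound
\[
\frac{1}{2}\sum_{X\times X}b (\ph\otimes \ph)\left(\nabla \left(g^{1/2}\right)\right)^2\leq \gamma\| \ph\|^{2}_{ gwm}
\]
is precisely the weak eikonal inequality of Theorem~\ref{thm:abstractRellich2} for $\Delta$ with weight $\tilde w$ and the same $g$ and $\gamma$. I would then invoke Theorem~\ref{thm:abstractRellich2} with the Laplacian $\Delta$, the Hardy weight $\tilde w=mw$, and the function $g$, obtaining for such (respectively all) $\ph\in C_{c}(Y)$
\[
\left\|1_{\ph}\Delta \ph\right\|_{\frac{g}{\tilde w}}\ge (1-\gamma)\,\|\ph\|_{g\tilde w}.
\]

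It remains to translate both sides back to $H$. Since $H=\frac{1}{m}\Delta$, we have $\Delta\ph=mH\ph$ pointwise, and using $1_{\ph}^{2}=1_{\ph}$,
\[
\left\|1_{\ph}\Delta \ph\right\|_{\frac{g}{\tilde w}}^{2}=\sum_{X}\frac{g}{mw}\left(1_{\ph}\,mH\ph\right)^{2}=\sum_{X}\frac{gm}{w}\left(1_{\ph}H\ph\right)^{2}=\left\|1_{\ph}H\ph\right\|_{\frac{g}{w}m}^{2},
\]
while $\|\ph\|^{2}_{g\tilde w}=\|\ph\|^{2}_{gwm}$. Combining these identities with the displayed inequality and taking square roots yields the asserted Rellich inequality. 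The main point—really only bookkeeping rather than a genuine obstacle—is tracking the measure $m$: one must recognize that the Hardy weight for $\Delta$ is $mw$ rather than $w$, and that the two factors of $m$ arising from $\Delta\ph=mH\ph$, together with the $1/(mw)$ weight on the left, recombine to exactly the weight $gm/w$ appearing in $\|\cdot\|_{\frac{g}{w}m}$. Once the weights are aligned, no further estimation is needed beyond citing Theorem~\ref{thm:abstractRellich2}.
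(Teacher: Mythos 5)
Your proof is correct and follows essentially the same route as the paper: both reduce to Theorem~\ref{thm:abstractRellich2} by observing that $w'=mw$ is a Hardy weight for $\Delta$ on $\ell^{2}(Y,1)$, that the weak eikonal hypotheses coincide, and that the weighted norms transform via $\Delta\ph=mH\ph$ exactly as you compute. The extra bookkeeping you spell out is precisely the identity $\|1_{\ph}\tfrac{1}{m}\Delta\ph\|_{\frac{g}{w}m}=\|1_{\ph}\Delta\ph\|_{\frac{g}{w'}}$ that the paper states in one line.
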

\begin{proof}The aim is to reduce the case above to the setting of Theorem~\ref{thm:abstractRellich2}.
If $ w $ is a Hardy weight for $ H=\frac{1}{m}\Delta$ on $ \ell^{2}(Y,m) $, then the function $ w'= wm $ is a Hardy weight for $ \Delta $ on $ \ell^{2}(Y)=\ell^{2}(Y,1) $, $ Y\subseteq X $. 	Hence, we get by the more general version of abstract Rellich inequality, Theorem~\ref{thm:abstractRellich2},
	\begin{align*}
		\| 1_{\ph} \tfrac{1}{m}\Delta\ph\|_{\frac{g}{w}m}=\| 1_{\ph} \Delta\ph\|_{\frac{g}{w'}}\ge (1-\gamma)\|\ph\|_{gw'}=(1-\gamma)\|\ph\|_{gwm}.
	\end{align*}
	This finishes the proof.
\end{proof}

\begin{proof}[Proof of Theorem~\ref{thm:abstractRellichSchrodinger}]
	By the theorem's assumptions $ H $ satisfies a Hardy inequality with a positive Hardy weight and by the Allegretto-Piepenbrink theorem, \cite[Theorem~4.2]{KePiPo1} there exists a strictly positive  function $ u $ such that $ Hu\ge0  $ on $ X $. By the ground state transform \cite[Section~4.2]{KePiPo1} we have for all $ \ph\in C_{c}(X) $
	\begin{align*} 	\frac{1}{2}\sum_{X\times X}b (u\otimes u)(\nabla \ph)^{2}+\sum_{X}u^{2}m\left(\frac{1}{u}Hu\right) \ph^{2}	=\sum_{X}(|\nabla (u\ph)|^{2}+mq(u\ph)^{2}).\end{align*}
	Hence, the Schrödinger operator with respect to the graph $ b (u\otimes u) $ and the potential $ \frac{1}{u}Hu $ satisfies a Hardy inequality on $ \ell^{2}(X,u^{2}m) $ with respect to the Hardy weight $ w $. We next construct a supergraph of this graph on the set $ X_{\infty}=X\cup\{\infty\} $ where $ \infty $ is a new vertex. Let $b_{\infty}:X_{\infty}\times X_{\infty}\to[0,\infty)  $ be the graph given by $ b_{\infty}= b(u\otimes u)$  on  $X\times X $,
\begin{align*}b_{\infty}(x,\infty)=b_{\infty}(\infty,x)=mu(x)(Hu)(x)\end{align*}
and $ b_{\infty}(\infty,\infty)=0 $.
	Note that $ \sum_{x\in X}b(\infty,x) $ is not necessarily finite,  but this does not matter as we will only consider functions which vanish at the vertex $ \infty $. With this construction we have using the Hardy inequality for $ H $  and all $ \ph\in C_{c}(X) $
	\begin{align*}	\frac{1}{2}\sum_{X_{\infty}\times X_{\infty}}b_{\infty}(\nabla \ph)^{2}= \sum_{X}(|\nabla (u\ph)|^{2}+mq(u\ph)^{2})\ge\|u\ph\|_{wm}^{2}= \|\ph\|_{u^{2}wm}^{2}.
	\end{align*}
	Thus, $ w $ is a Hardy weight on $\ell^{2}( X,u^{2}m)$ for the operator $ H_{\infty} $ associated to the graph $ b_{\infty} $ over $( X_{\infty},m_{\infty}) $, where we extend $ m $ to $ \{\infty\} $ arbitrarily and the potential $ q_{\infty}=0 $.
	 Furthermore, from the assumption $  {|\nabla g^{1/2}|^2}\leq \gamma  gwm$ 
	and from Young's inequality, $ 2((u\ph)\otimes (u\ph))(x,y) \leq (u\ph)^2(x)+(u\ph)^2(y)$, we deduce for $ \ph\in C_{c}(X) $
	\begin{align*}
\frac{1}{2}	\sum_{X\times X}b_{\infty}(\ph\otimes\ph)(\nabla g^{1/2})^{2}&=	\frac{1}{2}\sum_{X\times X}b\big((u\ph)\otimes (u\ph)\big)(\nabla  g^{1/2})^{2}\\
	&\leq \sum_{X} (u\ph)^{2}|\nabla g^{1/2}|^{2}\\
	&\leq\gamma \|\ph\|^{2}_{ gwu^{2}m}.
	\end{align*}
 Hence, we can apply Proposition~\ref{pro:q=0} with $ X\subseteq X_{\infty} $ playing the role of $ Y\subseteq X $ and $ H_{\infty} $ playing the role of $H  $. Specifically,  we obtain
for any $ \ph= u\psi\in C_c(X) $
	\begin{equation*}
\left \|  1_{\ph} H \ph \right\|_{\frac{g}{w}m} =	\left \| 1_{\psi}  H_{\infty} \psi  \right\|_{\frac{g}{w}u^2m} \geq  (1-\gamma)  \|   \psi \|_{gwu^{2}m}=(1-\gamma)\|   \varphi \|_{gwm}.  
	\end{equation*}
This finishes the proof.
\end{proof}

\section{Application to solutions of the equation $ Hu=f $}\label{sec_applications}

In this section we study solutions $ u\in \mathcal{F} $ of the equation $$ Hu=f  ,$$
where $ f $ is a function and $ H=\frac{1}{m}\Delta+q $ is a Schr\"odinger operator associated to a graph $ b $ over $ (X,m) $ and a potential $ q $.  We call a Schr\"odinger operator $ H $ {\em positive} on $(X,m)$ if for all $ \ph \in C_c(X) $
\begin{align*}
\sum_{X}m\ph H\ph=\sum_{X}(|\nabla \ph|^{2}+mq\ph^{2})\ge0,
\end{align*}
where the first equality is just Green's formula. We call the graph $ b $ \emph{connected} if for distinct $ x,y\in X $ there are $ x_{0} , \ldots, x_{n}\in X $ such that $ x=x_{0} \sim  \ldots\sim x_{n}=y $. 
Furthermore, we call a subset $ K $ connected if $ b\vert _{K\times K} $ is a connected graph.

In general, solutions of the equation $Hu=f$ do not need to exist. 
However, if $H$ is a positive Schr\"odinger operator on a connected, locally finite graph, then $H$ is surjective \cite[Theorem~2.2]{KS}.  Using the results of the previous section, we show the existence of solutions of the above equation on not necessarily locally finite graphs  under the assumption of the validity of a Rellich inequality.

\begin{thm}\label{thm-sol}
Let $H$ be a positive Schr\"odinger  operator for a connected graph over an infinite set $X$. Let $ \mu,\mu'$ be  measures of full support over $ X $  such that the following Rellich inequality holds
\begin{align*}
\|1_{\ph}H\ph\|_{\mu'}\ge \|\ph\|_{\mu} 
\end{align*}
for all $ \ph\in C_{c}(X) $. 
Then, for any $ f\in\ell^{2}(X,\mu') $, there exists a solution $ u \in \mathcal{F}(X)$ to the equation $ Hu=f $ such that
\begin{align*}
 \|u\|_{\mu} \leq \|f\|_{\mu'}. 
\end{align*}	
\end{thm}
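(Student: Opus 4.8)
The plan is to construct the solution by a finite exhaustion, using the Rellich inequality twice: once to solve the equation on finite subsets, and once to obtain a uniform a priori bound in $\ell^2(X,\mu)$ which survives the passage to the limit. I would fix an increasing sequence of finite connected sets $X_1\subseteq X_2\subseteq\cdots$ with $\bigcup_n X_n=X$; such an exhaustion exists precisely because the graph is connected, which is where I would use that hypothesis.

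First I would solve the equation on each $X_n$. Consider the linear map $C_c(X_n)\to C_c(X_n)$, $\ph\mapsto (H\ph)|_{X_n}$. If $(H\ph)|_{X_n}=0$ for some $\ph\in C_c(X_n)$, then since $\supp\ph\subseteq X_n$ we get $1_{\ph}H\ph=0$, so the Rellich inequality forces $\|\ph\|_{\mu}=0$, i.e.\ $\ph=0$; hence the map is injective and therefore bijective on the finite-dimensional space $C_c(X_n)$. (Equivalently, this is the surjectivity of the positive Schr\"odinger operator associated to the finite connected graph $b|_{X_n\times X_n}$, cf.\ \cite[Theorem~2.2]{KS}.) Thus there is a unique $u_n\in C_c(X_n)$ with $Hu_n=f$ on $X_n$.

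The a priori bound is the heart of the matter, and it explains the factor $1_{\ph}$. Since $\supp u_n\subseteq X_n$ and $Hu_n=f$ on $X_n$, we have $1_{u_n}Hu_n=1_{u_n}f$, so applying the Rellich inequality to $\ph=u_n$ gives $\|u_n\|_{\mu}\le\|1_{u_n}Hu_n\|_{\mu'}=\|1_{u_n}f\|_{\mu'}\le\|f\|_{\mu'}$. Hence $(u_n)$ is bounded in $\ell^2(X,\mu)$. Because $|u_n(x)|\le\|u_n\|_{\mu}/\mu(x)^{1/2}$, a diagonal argument extracts a subsequence converging pointwise to some $u\in C(X)$, and Fatou's lemma then yields $\|u\|_{\mu}\le\liminf_n\|u_n\|_{\mu}\le\|f\|_{\mu'}$, which is the asserted estimate. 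Note that without $1_{\ph}$ the left-hand side would be $\|Hu_n\|_{\mu'}$, which cannot be controlled by $\|f\|_{\mu'}$ since $Hu_n\ne f$ off $X_n$.

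It remains to verify $u\in\mathcal{F}(X)$ and $Hu=f$, and this limit passage is the step I expect to be the main obstacle. For each fixed $x$ we have $(Hu_n)(x)=f(x)$ once $x\in X_n$, and $u_n\to u$ pointwise, so $Hu=f$ will follow provided one may interchange the limit with the (possibly infinite) sum $\sum_y b(x,y)u_n(y)$ appearing in $\Delta$. When the graph is locally finite this sum is finite and the interchange is immediate, so both $u\in\mathcal{F}(X)$ and $Hu=f$ are automatic. In the general case I would justify the interchange through the uniform bound: by the Cauchy--Schwarz inequality the tails $\sum_{y\notin F}b(x,y)|u_n(y)|$ are dominated by $\big(\sum_{y\notin F}b(x,y)^2/\mu(y)\big)^{1/2}\|f\|_{\mu'}$ uniformly in $n$, so they vanish as the finite set $F\uparrow X$; equivalently, one tests the weak formulation $\sum_X m\,u_n H\ph=\sum_X m f\ph$ (Green's formula, valid for $n$ with $\supp\ph\subseteq X_n$) against $\ph\in C_c(X)$ and lets $n\to\infty$ using that $v\mapsto (Hv)(x)$ is continuous on $\ell^2(X,\mu)$. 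Securing this continuity, namely the summability $\sum_y b(x,y)^2/\mu(y)<\infty$, is the only delicate point; once it is in hand we obtain $u\in\mathcal{F}(X)$ with $Hu=f$ and $\|u\|_{\mu}\le\|f\|_{\mu'}$.
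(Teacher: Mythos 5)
Your construction of the approximants $u_n$ and the a priori bound $\|u_n\|_{\mu}\le\|1_{u_n}f\|_{\mu'}\le\|f\|_{\mu'}$ are correct, and you have correctly located the difficulty; but the way you propose to close the limit passage does not work under the stated hypotheses, and this is a genuine gap. Your tail estimate $\sum_{y\notin F}b(x,y)|u_n(y)|\le\bigl(\sum_{y\notin F}b(x,y)^2/\mu(y)\bigr)^{1/2}\|u_n\|_{\mu}$ requires $\sum_{y}b(x,y)^2/\mu(y)<\infty$, but the theorem assumes nothing linking $b$ to $\mu$ beyond full support of $\mu$; the graph only satisfies $\sum_y b(x,y)<\infty$, and $\mu$ may decay arbitrarily fast along the neighbours of $x$. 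The same obstruction blocks the weak-formulation variant: $H\ph$ is in general not finitely supported on a non-locally-finite graph, so $\sum_X m\,u_nH\ph$ cannot be passed to the limit from pointwise convergence and an $\ell^2(X,\mu)$ bound alone, and even a weak solution would still need to be upgraded to $u\in\mathcal{F}(X)$ with $Hu=f$ pointwise. Since the theorem is stated precisely to cover graphs that are not locally finite, this step cannot be left as a ``delicate point to be secured.''

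The paper closes exactly this gap by a different mechanism that needs no domination: positivity and monotonicity. One first treats $f\ge 0$. On a connected finite exhaustion $(K_k)$, the positivity of $H$ yields, via the maximum principle (Lemmas~5.14 and 5.15 and Theorem~5.16 of \cite{KePiPo1}), solutions $u_k\ge 0$ of $Hu_k=f1_{K_k}$ on $K_k$ with $u_k\le u_{k+1}$. The Rellich inequality gives the same uniform bound $\|u_k\|_{\mu}\le\|f\|_{\mu'}$ as yours, the monotone limit $u=\lim_k u_k$ exists pointwise, and the interchange $\sum_y b(x,y)u_k(y)\to\sum_y b(x,y)u(y)$ is justified by the monotone convergence theorem with no summability hypothesis on $b^2/\mu$; this simultaneously yields $u\in\mathcal{F}(X)$, $Hu=f$, and $\|u\|_{\mu}\le\|f\|_{\mu'}$ by Fatou. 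General $f$ is then handled by decomposing $f=f_+-f_-$, taking the nonnegative solutions $u^{(\pm)}$ of $Hu^{(\pm)}=f_{\pm}$ and setting $u=u^{(+)}-u^{(-)}$, with $\|u\|_{\mu}^2\le\|u^{(+)}\|_{\mu}^2+\|u^{(-)}\|_{\mu}^2\le\|f\|_{\mu'}^2$. To repair your argument you should import this positivity/monotonicity step in place of the diagonal-subsequence extraction; the rest of your proof can stay as is.
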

\begin{rem}
	If there exists a nontrivial harmonic function $ h $ for $ H $, then obviously, solutions to $ Hu=f $ are not unique. In this situation also the inequality $\|v\|_{\mu}\leq \|f\|_{\mu'}$ is not satisfied for solutions $v$ of the form $ v=u+\lm h $, at least for certain  $\lambda\in \R$.
\end{rem}

\begin{proof}[Proof of Theorem~\ref{thm-sol}]
Assume first that $ f\in \ell^{2}(X,\mu') $ satisfies $ f\ge0 $, and let $ (K_{k})$ be an exhaustion of $ X $ with connected finite sets $K_k$, ${k\in \N} $. We denote by $ f_{k}=f1_{K_{k}} $. Then, by the positivity of $ H $, there exists solutions $ u_{k}\ge0 $ to 
\begin{align*}
H u_{k}=f_{k}\qquad \mbox{on }K_{k},
\end{align*}
satisfying $ u_{k}\leq u_{k+1} $ for $k\geq 1$, see Lemma~5.14, Lemma~5.15, and Theorem~5.16 in \cite{KePiPo1}. By the Rellich inequality, we infer that
\begin{align*}
 \|u_{k}\|_{\mu}\leq \|1_{u_{k}}Hu_{k}\|_{\mu'}=\|f_{k}\|_{\mu'}\leq \|f\|_{\mu'}\,.
\end{align*}
Thus, $ (u_{k}) $ converges monotonously to a function $ u\ge0 $ which satisfies 
\begin{align*}
\|u\|_{\mu}\leq \|f\|_{\mu'}.
\end{align*}
Furthermore, by the monotone convergence we also deduce for all $  x\in X $ 
\begin{align*}
Hu(x)=\lim_{k\to\infty}Hu_{k}(x)=\lim_{k\to\infty}f_{k}(x)=f(x).
\end{align*}
This yields the statement of the theorem for $ f\in \ell^{2}(X,\mu') $ such that $ f\ge0 $.

\medskip

For general $ f\in \ell^{2}(X,\mu') $ we decompose $ f=f_{+}-f_{-} $ into its positive and negative parts, and let $u^{(\pm)}$ be the positive solutions of the equations $Hu^{(\pm)}=f_\pm$ obtained above.  Hence, $u= u^{(+)}-u^{(-)} $ satisfies  
$$ Hu=Hu^{(+)}-Hu^{(-)}=f_{+}-f_{-}=f .$$   Since $ u^{2}=(u^{(+)})^2 +(u^{(-)})^2-2u^{(+)}u^{(-)}\leq (u^{(+)})^2 +(u^{(-)})^2$, we have
\begin{align*}
\|u\|_{\mu}^{2}\leq \|u^{(+)}\|_{\mu}^{2}+\|u^{(-)}\|_{\mu}^{2}\leq \|f_{+}\|_{\mu'}^{2}+\|f_{-}\|_{\mu'}^{2}=\|f\|_{\mu'}^{2}.
\end{align*}
This finishes the proof.
\end{proof}

Finally, we present a corollary which is a direct consequence of Theorem~\ref{thm:abstractRellichSchrodinger} and Theorem~\ref{thm-sol}.
\begin{thm}
	Consider  a graph $b$ over $(X,m)$, let $ q $ be a potential.  Let $w$ be a strictly positive  Hardy weight for $ H $ on   $\ell^{2} (X,m) $.
Assume there is  a strictly positive function    $g \in \mathcal{F}(X)$ and $0 < \gamma < 1$ such that $g$ satisfies in $X$  the   eikonal inequality
	\[
	\frac{|\nabla \left(g^{1/2}\right)|^2}{g}\leq \gamma  wm .
	\]	
	Then, for all $ f\in \ell^{2}(X,{g}m/w) $, there is a solution $ u $ to the equation $ Hu=f $ which satisfies
	\[
	  \|   u \|_{gwm} \leq (1-\gamma)^{-1} \left \| f\right\|_{\frac{g}{w}m} .
	\]
\end{thm}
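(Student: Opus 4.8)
The plan is to derive this statement as a direct combination of Theorem~\ref{thm:abstractRellichSchrodinger} and Theorem~\ref{thm-sol}; the only genuine work is the bookkeeping of weights and the absorption of the factor $(1-\gamma)$ into a pair of measures.

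First I would check that $H$ is positive on $(X,m)$, which is needed to invoke Theorem~\ref{thm-sol}. Since $w$ is a strictly positive Hardy weight for $H$, Green's formula gives $\sum_{X} m\varphi H\varphi = \sum_{X}(|\nabla\varphi|^2 + mq\varphi^2) \geq \|\varphi\|_{mw}^2 \geq 0$ for every $\varphi \in C_c(X)$, so $H$ is positive. At this point I would also record the connectivity hypothesis required by Theorem~\ref{thm-sol}; if connectivity is not part of the standing assumptions, one applies the argument separately on each connected component of the graph.

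Next, the eikonal inequality $|\nabla(g^{1/2})|^2/g \leq \gamma wm$ is precisely the hypothesis of Theorem~\ref{thm:abstractRellichSchrodinger}, so that theorem yields the Rellich-type inequality
\[
\|1_{\varphi} H\varphi\|_{\frac{g}{w}m} \geq (1-\gamma)\|\varphi\|_{gwm}, \qquad \varphi \in C_c(X).
\]
To feed this into Theorem~\ref{thm-sol}, which demands a Rellich inequality with constant one, I would introduce the two measures of full support $\mu := gwm$ and $\mu' := (1-\gamma)^{-2}(g/w)m$, both strictly positive since $g,w,m>0$. Because scaling a weight by a positive constant yields $\|\cdot\|_{\mu'} = (1-\gamma)^{-1}\|\cdot\|_{\frac{g}{w}m}$, dividing the displayed inequality by $(1-\gamma)$ gives exactly $\|1_{\varphi} H\varphi\|_{\mu'} \geq \|\varphi\|_{\mu}$ for all $\varphi \in C_c(X)$, which is the hypothesis of Theorem~\ref{thm-sol}.

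Finally, since the underlying Hilbert space is unchanged under a positive rescaling of the weight, $\ell^2(X,\mu') = \ell^2(X,(g/w)m)$, so any $f \in \ell^2(X,(g/w)m)$ belongs to $\ell^2(X,\mu')$. Applying Theorem~\ref{thm-sol} then produces a solution $u \in \mathcal{F}(X)$ of $Hu=f$ satisfying $\|u\|_{\mu} \leq \|f\|_{\mu'}$, and unwinding the definitions turns this into $\|u\|_{gwm} \leq (1-\gamma)^{-1}\|f\|_{\frac{g}{w}m}$, the claimed bound. I do not expect any real obstacle beyond tracking the powers of $(1-\gamma)$ correctly in the two weights; the only point that genuinely requires attention is the connectivity hypothesis of Theorem~\ref{thm-sol}, which must either be assumed or dealt with componentwise as noted above.
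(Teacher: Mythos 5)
Your proposal is correct and follows exactly the route the paper intends: the result is stated there as a direct consequence of Theorem~\ref{thm:abstractRellichSchrodinger} and Theorem~\ref{thm-sol}, and you supply precisely the missing bookkeeping (positivity of $H$ from the Hardy inequality via Green's formula, the rescaled measures $\mu=gwm$ and $\mu'=(1-\gamma)^{-2}\tfrac{g}{w}m$ to absorb the constant, and the observation that $\ell^2(X,\mu')=\ell^2(X,\tfrac{g}{w}m)$). Your remark about the connectivity hypothesis of Theorem~\ref{thm-sol}, which the paper leaves implicit, is a legitimate and correctly handled point.
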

\bibliographystyle{alpha}
\bibliography{literature}

\begin{thebibliography}{KPP18b}

\bibitem[Agm82]{Ag82}
Shmuel Agmon.
\newblock {\em Lectures on exponential decay of solutions of second-order
  elliptic equations: bounds on eigenfunctions of {$N$}-body {S}chr\"odinger
  operators}, volume~29 of {\em Mathematical Notes}.
\newblock Princeton University Press, Princeton, NJ; University of Tokyo Press,
  Tokyo, 1982.

\bibitem[Bar87]{Barles}
G.~Barles.
\newblock On eikonal equations associated with {S}chr\"{o}dinger operators with
  nonspherical radiation conditions.
\newblock {\em Comm. Partial Differential Equations}, 12(3):263--283, 1987.

\bibitem[BEL15]{BEL15}
Alexander~A. Balinsky, W.~Desmond Evans, and Roger~T. Lewis.
\newblock {\em The analysis and geometry of {H}ardy's inequality}.
\newblock Universitext. Springer, Cham, 2015.

\bibitem[DFP14]{DFP}
Baptiste Devyver, Martin Fraas, and Yehuda Pinchover.
\newblock Optimal {H}ardy weight for second-order elliptic operator: an answer
  to a problem of {A}gmon.
\newblock {\em J. Funct. Anal.}, 266(7):4422--4489, 2014.

\bibitem[DH98]{DH}
E.~B. Davies and A.~M. Hinz.
\newblock Explicit constants for {R}ellich inequalities in {$L_p(\Omega)$}.
\newblock {\em Math. Z.}, 227(3):511--523, 1998.

\bibitem[FS08]{FS08}
Rupert~L. Frank and Robert Seiringer.
\newblock Non-linear ground state representations and sharp {H}ardy
  inequalities.
\newblock {\em J. Funct. Anal.}, 255(12):3407--3430, 2008.

\bibitem[Gol14]{Gol14}
Sylvain Gol{\'e}nia.
\newblock Hardy inequality and asymptotic eigenvalue distribution for discrete
  {L}aplacians.
\newblock {\em J. Funct. Anal.}, 266(5):2662--2688, 2014.

\bibitem[HK11]{HK}
Sebastian Haeseler and Matthias Keller.
\newblock Generalized solutions and spectrum for {D}irichlet forms on graphs.
\newblock In {\em Random walks, boundaries and spectra}, volume~64 of {\em
  Progr. Probab.}, pages 181--199. Birkh\"auser/Springer Basel AG, Basel, 2011.

\bibitem[KL16]{KaLa16}
Lev Kapitanski and Ari Laptev.
\newblock On continuous and discrete {H}ardy inequalities.
\newblock {\em J. Spectr. Theory}, 6(4):837--858, 2016.

\bibitem[KMP06]{KMP06}
Alois Kufner, Lech Maligranda, and Lars-Erik Persson.
\newblock The prehistory of the {H}ardy inequality.
\newblock {\em Amer. Math. Monthly}, 113(8):715--732, 2006.

\bibitem[KPP17]{KePiPo1}
Matthias Keller, Yehuda Pinchover, and Felix Pogorzelski.
\newblock Criticality theory for {S}chr{\"o}dinger operators on graphs.
\newblock {\em {T}o appear in J. Spectr. Th., arXiv:1708.09664}, 2017.

\bibitem[KPP18a]{KePiPo3}
Matthias Keller, Yehuda Pinchover, and Felix Pogorzelski.
\newblock An improved discrete {H}ardy inequality.
\newblock {\em Amer. Math. Monthly}, 125(4):347--350, 2018.

\bibitem[KPP18b]{KePiPo2}
Matthias Keller, Yehuda Pinchover, and Felix Pogorzelski.
\newblock Optimal {H}ardy inequalities for {S}chr\"odinger operators on graphs.
\newblock {\em Comm. Math. Phys.}, 358(2):767--790, 2018.

\bibitem[KS19]{KS}
Jannis Koberstein and Marcel Schmidt.
\newblock A note on the surjectivity of operators on vector bundles over
  discrete spaces.
\newblock {\em Preprint arXiv:1905.06713}, 2019.

\bibitem[Rel56]{Rel}
Franz Rellich.
\newblock Halbbeschr\"{a}nkte {D}ifferentialoperatoren h\"{o}herer {O}rdnung.
\newblock In {\em Proceedings of the {I}nternational {C}ongress of
  {M}athematicians, 1954, {A}msterdam, vol. {III}}, pages 243--250. Erven P.
  Noordhoff N.V., Groningen; North-Holland Publishing Co., Amsterdam, 1956.

\bibitem[Rob18]{Ro18}
Derek~W. Robinson.
\newblock Hardy inequalities, {R}ellich inequalities and local {D}irichlet
  forms.
\newblock {\em J. Evol. Equ.}, 18(3):1521--1541, 2018.

\bibitem[Ros64]{Rosen}
Nathan Rosen.
\newblock Identical motion in quantum and classical mechanics.
\newblock {\em Amer. J. Phys.}, 32:377--379, 1964.

\bibitem[Sai87]{Saito}
Yoshimi Sait{o}.
\newblock Schr\"{o}dinger operators with a nonspherical radiation condition.
\newblock {\em Pacific J. Math.}, 126(2):331--359, 1987.

\bibitem[Uch98]{Uch98}
K\^ohei Uchiyama.
\newblock Green's functions for random walks on {${\bf Z}^N$}.
\newblock {\em Proc. London Math. Soc. (3)}, 77(1):215--240, 1998.

\end{thebibliography}

\end{document}